\begin{document}
\newtheorem{thm}{Theorem}
\newtheorem{cor}[thm]{Corollary}
\newtheorem{conj}[thm]{Conjecture}
\newtheorem{lemma}[thm]{Lemma}
\newtheorem{prop}{Proposition}
\newtheorem{problem}[thm]{Problem}
\newtheorem{remark}[thm]{Remark}
\newtheorem{defn}[thm]{Definition}
\newtheorem{ex}[thm]{Example}

\newcommand{\nrho}{\rho}
\newcommand{\mR}{{\mathbb R}}
\newcommand{\R}{{\mathbb R}}
\newcommand{\mD}{{\mathbb D}}
\newcommand{\mE}{{\mathbb E}}  
\newcommand{\cN}{{\mathcal N}}
\newcommand{\cR}{{\mathcal R}}
\newcommand{\cS}{{\mathcal S}}
\newcommand{\cC}{{\mathcal C}}
\newcommand{\cD}{{\mathcal D}}
\newcommand{\cE}{{\mathcal E}}
\newcommand{\cF}{{\mathcal F}}
\newcommand{\cK}{{\mathcal K}}
\newcommand{\cL}{{\mathcal L}}
\newcommand{\cU}{{\mathcal U}}
\newcommand{\cV}{{\mathcal V}}
\newcommand{\cP}{{\mathcal P}}
\newcommand{\cH}{{\mathcal H}}
\newcommand{\cQ}{{\mathcal Q}}
\newcommand{\diag}{\operatorname{diag}}
\newcommand{\tr}{\operatorname{trace}}
\newcommand{\rank}{\operatorname{rank}}
\newcommand{\f}{{\mathfrak f}}
\newcommand{\g}{{\mathfrak g}}
\newcommand{\range}{\cR}  
\newcommand{\trace}{\operatorname{trace}}
\newcommand{\argmin}{\operatorname{argmin}}
\renewcommand{\break}{\vspace*{.2in}\hrule\vspace*{.2in}}

\newcommand{\xx}{x}
\newcommand{\x}{x}
\newcommand{\E}{{\mathbb E}}
\newcommand{\D}{{\mathbb D}}
\newcommand{\vv}{v}
\newcommand{\tstart}{0}
\newcommand{\tend}{1}
\newtheorem{proposition}{Proposition}

\newcommand{\ignore}[1]{}

\def\spacingset#1{\def\baselinestretch{#1}\small\normalsize}
\setlength{\parskip}{10pt}
\setlength{\parindent}{20pt}
\spacingset{1}

\newcommand{\mike}{\color{magenta}}
\definecolor{grey}{rgb}{0.4,0.4,0.4}
\definecolor{lightgray}{rgb}{0.97,.99,0.99}

\title{Optimal transport over a linear dynamical system}


\author{Yongxin Chen, Tryphon Georgiou and Michele Pavon}

\maketitle
{\begin{abstract}
We consider the problem of steering an initial probability density for the state vector of a linear system
to a final one, in finite time, using minimum energy control. In the case where the dynamics correspond to an integrator ($\dot x(t) = u(t)$) this amounts to a Monge-Kantorovich Optimal Mass Transport (OMT) problem. In general, we show that the problem can again be reduced to solving an OMT problem and that it has a unique solution. In parallel, we study the optimal steering of the state-density of a linear stochastic system with white noise disturbance; this is known to correspond to a {\em Schr\"odinger bridge}. As the white noise intensity tends to zero, the flow of densities converges to that of the deterministic dynamics and can serve as a way to compute the solution of its deterministic counterpart. The solution can be expressed in closed-form for Gaussian initial and final state densities in both cases.
\end{abstract}}

\noindent{\bf Keywords:}
Optimal mass transport, Schr\"odinger bridges, stochastic linear systems.

\section{Introduction}
We are interested in stochastic control problems to steer the probability density of the state-vector of a linear system between an initial and a final distribution for two cases, i) with and ii) without stochastic disturbance. That is, we consider the linear dynamics
\begin{equation}\label{eq:definingsystem}
dx(t)=A(t)x(t)dt+B(t)u(t)dt+ \sqrt{\epsilon}B(t)dw(t)
\end{equation}
where $w$ is a Wiener process, $u$ is a control input, $x$ is the state process, and $(A,B)$ is a controllable pair of matrices, for the two cases where i) $\epsilon>0$ and ii) $\epsilon=0$. In either case, the state is a random vector with an initial distribution $\mu_0$. Our task is to determine a minimum energy input that drives the system to a final state distribution $\mu_1$ over the time interval\footnote{There is no loss in generality having time window $[0,1]$ instead of, the more general $[t_0,t_1]$. This is done for notational convenience.} $[0,\,1]$, that is, the minimum of
\begin{equation}\label{eq:energy}
\E\{\int_0^1\|u(t)\|^2dt\}
\end{equation}
subject to $\mu_1$ being the probability distribution of the state vector at $t=1$.

When the state distribution represents density of particles whose position obeys $\dot x(t) = u(t)$ (i.e., $A(t)\equiv 0$, $B(t)\equiv I$, and $\epsilon=0$) the problem reduces to the classical Optimal Mass Transport ({\bf OMT}) problem\footnote{Historically, the modern formulation of OMT is due to Leonid Kantorovich \cite{Kan42} and has been the focus of dramatic developments because of its relevance in many diverse fields including economics, physics, engineering, and probability \cite{Rac98,Vil03,Vil08,GanMcc96,JorKinOtt98,BenBre00,AmbGigSav06,Vil03,Vil08,NinGeoTan13}. Kantorovich's contributions and the impact of the OMT to resource allocation was recognized with the Nobel Prize in Economics in 1975.}
 with quadratic cost \cite{Vil03,BenBre00}. Thus, the above problem, for $\epsilon=0$, represents a generalization of OMT to deal with particles obeying known ``prior'' {\em non-trivial} dynamics while being steered between two end-point distributions -- we refer to this as the problem of {\em OMT with prior dynamics} ({\bf OMT-wpd}). The problem of OMT-wpd was first introduced in our previous work \cite{CheGeoPav14e} for the case where $B(t)\equiv I$. The difference of course to the classical OMT is that, here, the linear dynamics are arbitrary and may facilitate or hinder transport. Applications are envisioned in the steering of particle beams through time-varying potential, the steering of swarms (UAV's, large collection of microsatelites, etc.), as well as in the modeling of the flow and collective motion of particles, clouds, platoons, flocking of insects, birds, fish, etc.\ between end-point distributions  \cite{LeoFio01}, and the interpolation/morphing of distributions \cite{AngHakTan03}.

In the case where $\epsilon>0$ and a stochastic disturbance is present, the flow of ``particles'' is dictated by dynamics as well as by Brownian diffusion. The corresponding
stochastic control problem to steer the state density function between the end-point distributions has been recently shown to be equivalent to the so-called {\em Schr\"odinger bridge problem}\footnote{The Schr\"odinger bridge problem, in its original formulation \cite{Sch31,Sch32,Wak90}, seeks a probability law on  path space with given two end-point marginals which is close to a Markovian {\em prior} distribution in the sense of large deviations (minimum relative entropy).
Early important contributions were due to Fortet, Beurling, Jamison and F\"{o}llmer \cite{For40,Beu60,Jam74,Fol88} while renewed interest was sparked after a close relationship to stochastic control was recognized \cite{Dai91,DaiPav90,PavWak91}.} \cite{Dai91,CheGeoPav14a,CheGeoPav14d}.
The Sch\"odinger bridge problem can be seen as a stochastic version of OMT due to the presence of the diffusive term in the dynamics. As a result, its solution is more well behaved due to the smoothing property of the Laplacian. On the other hand, it follows from \cite{Leo12,Leo13,Mik04, MikThi08} that for the special case $A(t)\equiv 0$ and $B(t)\equiv I$, the solution to the Schr\"odinger bridge problem tends to that of the OMT when ``slowing down'' the diffusion by taking $\epsilon\to 0$. These two facts suggest the Schr\"odinger bridge problem as a means to construct solutions to OMT for both, the standard one as well as the problem of OMT with prior dynamics.

The present work begins with an expository prologue on OMT (Section \ref{sec:OMT}). We then develop the theory of OMT-wpd (Section \ref{sec:OMTwithprior}) and establish that OMT-wpd always has a unique solution.
Next we discuss in parallel the theory of the Sch\"odinger bridge problem for linear dynamics and arbitrary end-point marginals (Section \ref{sec:slowingdown}). We focus on the connection between the two problems and in Theorem \ref{thm:priorslowingdown} we establish that the solution to the OMT-wpd is indeed the limit, in a suitable sense, of the corresponding solution to the Schr\"odinger bridge problem. In Section \ref{sec:gaussian} we specialize to the case of linear dynamics with Gaussian marginals, where closed-form solutions are available for both problems. The form of solution underscores the connection between the two and that the OMT-wpd is the limit of the Schr\"odinger bridge problem when the diffusion term vanishes. In Section \ref{sec:examples} we work out two academic examples to highlight the relation between the two problems (OMT and Schr\"odinger bridge).

\section{Optimal mass transport}\label{sec:OMT}

Consider two nonnegative measures $\mu_0, \mu_1$ on ${\mathbb R}^n$ having equal total mass.  These may represent probability distributions, distribution of resources, etc.
In the original formulation of OMT, due to Gaspar Monge, a transport (measurable) map
\[
T\;:\;{\mathbb R}^n\to{\mathbb R}^n\;:\;x\mapsto T(x)
\]
is sought that specifies where mass $\mu_0(dx)$ at $x$ must be transported so as to match the final distribution in the sense that $T_\sharp \mu_0=\mu_1$, i.e.
$\mu_1$ is the ``push-forward'' of $\mu_0$ under $T$ meaning
\[
\mu_1(B)=\mu_0(T^{-1}(B))
\]
for every Borel set in $\R^n$. Moreover, the map must incur minimum cost of transportation
\[
\int c(x,T(x))\mu_0(dx).
\]
Here, $c(x,y)$ represents the transportation cost per unit mass from point $x$ to point $y$ and in this section it will be taken as $c(x,y)=\frac{1}{2}\|x-y\|^2$.

The dependence of the transportation cost on $T$ is highly nonlinear and a minimum may not exist. This fact complicated early analyses to the problem due to Abel and others \cite{Vil03}. A new chapter opened in 1942 when Leonid Kantorovich presented a relaxed formulation. In this, instead of seeking a transport map, we seek a joint distribution $\Pi(\mu_0,\mu_1)$ on the product space $\R^n\times\R^n$ so that the marginals along the two coordinate directions coincide with $\mu_0$ and $\mu_1$ respectively.
The joint distribution $\Pi(\mu_0,\mu_1)$ is refered to as ``coupling" of $\mu_0$ and $\mu_1$.
Thus, in the Kantorovich formulation we seek
    \begin{equation}\label{eq:OptTrans}
        \inf_{\pi\in\Pi(\mu_0,\mu_1)}\int_{\R^n\times\R^n}\frac{1}{2}\|x-y\|^2d\pi(x,y)
    \end{equation}
When the optimal Monge-map $T$ exists, the support of the coupling is precisely the graph of $T$, see~\cite{Vil03}.

Formulation \eqref{eq:OptTrans} represents a ``static'' end-point formulation, i.e., focusing on ``what goes where''.
Ingenious insights due to Benamou and Brenier \cite{BenBre00} and \cite{McC97}
led to a fluid dynamic formulation of OMT. An elementary derivation of the above was presented in \cite{CheGeoPav14e} which we now follow. OMT is first cast as a stochastic control problem with atypical boundary constraints:
    \begin{subequations}\label{eq:stochcontr}
    \begin{eqnarray}\label{eq:stochcontr1}
        &&\inf_{\vv\in\cV}\E\left\{\int_{\tstart}^{\tend}\frac{1}{2}\|\vv(t,\x^\vv(t))\|^2dt\right\}
        \\
        && \dot{\x}^\vv(t)=\vv(t,\x^\vv(t)),
        \\
        && \x^v(\tstart)\sim\mu_0,\quad \x^v(\tend)\sim\mu_1.
    \end{eqnarray}
    \end{subequations}
Here $\cV$ represents the family of continuous feedback control laws.
From this point on we assume that
$\mu_0$ and $\mu_1$ are absolutely continuous, i.e., $\mu_0(dx)=\rho_0(x)dx$, $\mu_1(dy)=\rho_1(y)dy$ with $\rho_0,\rho_1$ corresponding density functions, and accordingly a distribution for $\x^\vv(t)\sim\rho(t,x)dx$. Then, $\rho$ satisfies weakly\footnote{In the sense that, $\int_{\mR^n\times[0,1]}(\partial f/\partial t+v\cdot\nabla f)\rho dxdt=0$ for smooth functions $f$ with compact support.} the continuity equation
    \begin{equation}\label{eq:continuity}
        \frac{\partial \rho}{\partial t}+\nabla\cdot(\vv\rho)=0
    \end{equation}
expressing the conservation of probability mass and
    \[
        \E\left\{\int_{\tstart}^{\tend}\frac{1}{2}\|\vv(t,\x^\vv(t))\|^2dt\right\}=\int_{\R^n}\int_{\tstart}^{\tend}\frac{1}{2}\|\vv(t,x)\|^2\rho(t,x)dtdx.
    \]
As a consequence, (\ref{eq:stochcontr}) is reformulated as a ``fluid-dynamics'' problem \cite{BenBre00}:
    \begin{subequations}\label{eq:BB}
    \begin{eqnarray}\label{eq:BB1}
        &&\inf_{(\rho,v)}\int_{\R^n}\int_{\tstart}^{\tend}\frac{1}{2}\|\vv(t,x)\|^2\rho(t,x)dtdx,\\&&\frac{\partial \rho}{\partial t}+\nabla\cdot(\vv\rho)=0,\label{eq:BB2}\\&& \rho(\tstart,x)=\rho_0(x), \quad \rho(\tend,y)=\rho_1(y).\label{eq:boundary}
    \end{eqnarray}
    \end{subequations}

\subsection{Solutions to OMT}

Assuming that $\mu_0,\mu_1$ are absolutely continuous ($d\mu_0(dx)=\rho_0(x)dx$ and $d\mu_1(dx)=\rho_1(x)dx$) it is a standard result that OMT has a unique solution \cite{Bre91,Vil03,Vil08} and that an optimal transport $T$ map exists and is the gradient of a convex function $\phi$, i.e.,
	\begin{equation}\label{eq:optimalmap}
		y=T(x)=\nabla\phi(x).
	\end{equation}
By virtue of the fact that the push-forward of $\mu_0$ under $\nabla\phi$ is $\mu_1$, this function satisfies a particular case of the  Monge-Amp\`ere equation \cite[p.126]{Vil03}, \cite[p.377]{BenBre00}, namely, $\det(H\phi(x))\rho_1(\nabla \phi(x))=\rho_0(x)$, where $H\phi$ is the Hessian matrix of $\phi$, which is a fully nonlinear second-order elliptic equation. The computation of $\phi$ has received attention only recently \cite{BenBre00}, \cite{AngHakTan03} where numerical schemes have been developed. We will appeal to the availability of $\phi$ in the sequel without being concerned about its explicit computation.
	
Having $T$, the displacement of the mass along the path from $t=0$ to $t=1$ is
    \begin{subequations}\label{eq:displacementinterp}
	\begin{equation}
		\mu_t=(T_t)_\sharp \mu_0,~~~T_t(x)=(1-t)x+tT(x)
	\end{equation}
while $\mu_t$ is absolutely continuous with derivative
	\begin{equation}
		\rho(t,x)=d\mu_t(x)/dx.
	\end{equation}
    \end{subequations}
	Then, $\vv(t,x)=T\circ T_t^{-1}(x)-T_t^{-1}(x)$ and $\rho(t,x)$ together solve \eqref{eq:BB}. Here $\circ$ denotes the composition of maps.

\subsection{Variational analysis}\label{sec:var1}

In this subsection we briefly recapitulate the sufficient optimality conditions for a pair $(\rho(\cdot,\cdot),v(\cdot,\cdot))$ to be a solution of \eqref{eq:BB} from \cite{BenBre00} (see also \cite[Section II]{CheGeoPav14e} for an alternative elementary derivation).
 \begin{proposition}
Consider $\rho^*(t,x)$ with $t\in[\tstart,\tend]$ and $x\in {\mathbb R}^n$, that satisfies
    \begin{subequations}\label{eq:sufficient}
    \begin{equation}\label{eq:optev}
        \frac{\partial \rho^*}{\partial t}+\nabla\cdot(\nabla\psi\rho^*)=0, \quad \rho^*(\tstart,x)=\rho_0(x),
    \end{equation}
where $\psi$ is a solution of the Hamilton-Jacobi equation
    \begin{equation}\label{eq:HJclass}
        \frac{\partial \psi}{\partial t}+\frac{1}{2}\|\nabla\psi\|^2=0.
    \end{equation}
If in addition
    \begin{equation}
        \rho^*(\tend,x)=\rho_1(x),
    \end{equation}
    \end{subequations}
then the pair $\left(\rho^*,\vv^*\right)$ with $\vv^*(t,x)=\nabla\psi(t,x)$ is a solution of \eqref{eq:BB}.
\end{proposition}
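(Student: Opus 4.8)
The plan is a standard verification (sufficiency) argument: exhibit a lower bound for the cost functional in \eqref{eq:BB1} that holds for every admissible pair and is attained by $(\rho^*,\nabla\psi)$.

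First I would fix an arbitrary admissible pair $(\rho,\vv)$ --- one satisfying the continuity equation \eqref{eq:BB2} weakly together with the boundary conditions \eqref{eq:boundary} --- and use convexity of $v\mapsto\tfrac12\|v\|^2$ in the pointwise ``tangent plane'' form
\[
\tfrac12\|\vv(t,x)\|^2\;\ge\;\tfrac12\|\nabla\psi(t,x)\|^2+\nabla\psi(t,x)\cdot\big(\vv(t,x)-\nabla\psi(t,x)\big)=\nabla\psi\cdot\vv-\tfrac12\|\nabla\psi\|^2,
\]
valid at every $(t,x)$, with equality exactly when $\vv=\nabla\psi$. Substituting \eqref{eq:HJclass} in the form $-\tfrac12\|\nabla\psi\|^2=\partial\psi/\partial t$ turns this into $\tfrac12\|\vv\|^2\ge\partial\psi/\partial t+\vv\cdot\nabla\psi$. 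Multiplying by $\rho\ge0$ and integrating over $\R^n\times[\tstart,\tend]$ gives
\[
\int_{\R^n}\!\int_{\tstart}^{\tend}\tfrac12\|\vv\|^2\rho\,dt\,dx\;\ge\;\int_{\R^n}\!\int_{\tstart}^{\tend}\Big(\frac{\partial\psi}{\partial t}+\vv\cdot\nabla\psi\Big)\rho\,dt\,dx.
\]

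The crucial observation is that the right-hand side is the same for all admissible pairs. Using $f=\psi$ as a test function in the weak formulation of \eqref{eq:BB2}, but retaining the temporal boundary terms, yields
\[
\int_{\R^n}\!\int_{\tstart}^{\tend}\Big(\frac{\partial\psi}{\partial t}+\vv\cdot\nabla\psi\Big)\rho\,dt\,dx=\int_{\R^n}\psi(\tend,x)\rho_1(x)\,dx-\int_{\R^n}\psi(\tstart,x)\rho_0(x)\,dx=:J^*,
\]
a quantity determined solely by $\psi$ and the prescribed marginals $\rho_0,\rho_1$. Hence the cost of every admissible pair is at least $J^*$. For the candidate $(\rho^*,\vv^*)$ with $\vv^*=\nabla\psi$, equality holds in the pointwise inequality above, while $\rho^*$ is admissible by \eqref{eq:optev} together with the hypothesis $\rho^*(\tend,\cdot)=\rho_1$; therefore its cost equals $J^*$ exactly, so $(\rho^*,\nabla\psi)$ attains the infimum in \eqref{eq:BB}.

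The algebra is routine; the one delicate point I expect to have to handle carefully is justifying the integration by parts that produces $J^*$. This requires $\psi$ to be regular with adequate spatial decay and $\rho$ to be sufficiently integrable (e.g.\ with finite second moments, so that the cost and the boundary integrals are finite), so that the weak form of \eqref{eq:BB2} applies with the non-compactly-supported test function $\psi$ and the terms at $t=\tstart,\tend$ are well defined. Under the standing assumption that $\mu_0,\mu_1$ are absolutely continuous with finite energy --- and appealing, if needed, to the regularity of the Brenier potential $\phi$ of \eqref{eq:optimalmap} and of the associated displacement interpolation \eqref{eq:displacementinterp} to exhibit such a $\psi$ --- this step can be carried through. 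Since only sufficiency is claimed, no separate uniqueness argument is needed here.
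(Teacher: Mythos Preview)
Your proposal is correct and is essentially the same argument the paper has in mind. The paper does not spell out a proof of this proposition (it cites \cite{BenBre00} and \cite{CheGeoPav14e}), but in Section~\ref{sec:variational_omtwp} it carries out the analogous computation for the more general problem via a Lagrange multiplier $\lambda$: integrate by parts against the continuity equation, minimize pointwise in the control to obtain $v^*=\nabla\lambda$, and observe that the residual vanishes when $\lambda$ solves the Hamilton--Jacobi equation. Your verification argument is the same computation reorganized---the convexity inequality $\tfrac12\|v\|^2\ge\nabla\psi\cdot v-\tfrac12\|\nabla\psi\|^2$ is exactly the pointwise minimization step, and your integration by parts producing $J^*$ is the same one the paper performs on the Lagrangian---so the two presentations differ only in packaging, not in substance.
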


The stochastic nature of the Benamou-Brenier formulation \eqref{eq:BB} stems from the fact that initial and final densities are specified. Accordingly, the above requires solving a two-point boundary value problem and the resulting control dictates the local velocity field. In general, one cannot expect to have a classical solution of \eqref{eq:HJclass} and has to be content with a viscosity solution. Let $\psi$ be a viscosity solution of \eqref{eq:HJclass} that admits the Hopf-Lax representation \cite[p.\ 174]{Vil03}\cite[p.\ 4]{Leo13}
    \[
        \psi(t,x)=\inf_y \left\{\psi(0,y)+\frac{\|x-y\|^2}{2t}\right\},~~t\in(0,1]
    \]
with
    \[
        \psi(0,x)=\phi(x)-\frac{1}{2}\|x\|^2
    \]
and $\phi$ as in \eqref{eq:optimalmap}, then this $\psi$ together with the displacement interpolation $\rho$ in \eqref{eq:displacementinterp} is a solution to~\eqref{eq:sufficient}.

\section{Optimal mass transport with prior dynamics}\label{sec:OMTwithprior}

Optimal transport has also been studied for general cost $c(x,y)$ that
derives from an action functional
\begin{equation}\label{cost}
c(x,y)=\inf_{\x\in\mathcal X_{xy}}\int_{\tstart}^{\tend}L(t,x(t),\dot{x}(t))dt,
\end{equation}
where the Lagrangian $L(t,x,p)$ is strictly convex and superlinear in the velocity variable $p$, see \cite[Chapter 7]{Vil08}, \cite[Chapter 1]{Fig08}, \cite{BerBuf07}. Existence and uniqueness of an optimal transport map $T$ has been established\footnote{OMT has also been studied and similar results established for $\R^n$ replaced by a Riemannian manifold.} for general cost functionals as in  \eqref{cost}.
It is easy to see that the choice $c(x,y)=\frac{1}{2}\|x-y\|^2$ is the special case where
\[L(t,x,p)=\frac{1}{2}\|p\|^2.\]
Another interesting special case is when
\begin{equation}\label{special}\quad L(t,x,p)=\frac{1}{2}\|p-v(t,x)\|^2.
\end{equation}
This has been motivated by a transport problem ``with prior" associated to the velocity field $v(t,x)$ \cite[Section VII]{CheGeoPav14e}.  There the prior was thought to reflect a solution to a ``nearby'' problem that needs to be adjusted so as to be consistent with updated estimates for marginals.

An alternative motivation for \eqref{special} is to address transport in an ambient flow field $v(t,x)$. In this case, assuming the control has the ability to steer particles in all directions, transport will be effected according to dynamics
\[
\dot x(t)=v(t,x)+u(t)
\]
where $u(t)$ represents control effort and 
\[
\int_0^1\frac{1}{2}\|u(t)\|^2dt=\int_0^1\frac{1}{2}\|\dot{x}(t)-v(t,x)\|^2dt
\]
represents corresponding quadratic cost (energy). 
Thus, it is of interest to consider more general dynamics where the control does not affect directly all state directions.  One such example is the problem to steer inertial particles in phase space through force input (see \cite{CheGeoPav14a} and \cite{CheGeoPav14b} where similar problems have been considered for dynamical systems with stochastic excitation).

Therefore, herein, we consider a natural generalization of OMT where the transport paths are required to satisfy dynamical constraints. We focus our attention on linear dynamics and, consequently, cost of the form
\begin{subequations}\label{generalcontrol}
\begin{eqnarray}c(x,y)&=&\inf_{\mathcal U}\int_{\tstart}^{\tend}\tilde L(t,x(t),u(t))dt,\mbox{ where}\\&&\dot{x}(t)=A(t)x(t)+B(t)u(t),\\ &&x(0)=x,\quad x(1)=y,
\end{eqnarray}
\end{subequations}
and $\mathcal U$ is a suitable class of controls\footnote{Note that we use a common convention to denote by $x$ a point in the state space and by $x(t)$ a state trajectory.}. This formulation extends the transportation problem in a similar manner as optimal control  generalizes the classical calculus of variations \cite{FleRis75} (albeit herein only for linear dynamics). 
It is easy to see that \eqref{special} corresponds to  $A(t)=0$ and $B(t)$ the identity matrix in \eqref{generalcontrol}. When $B(t)$ is invertible, \eqref{generalcontrol} reduces to \eqref{cost} by a change of variables, taking
\[
L(t,x,p)=\tilde L(t,x,B(t)^{-1}(p-A(t)x)).
\]
 However,
when $B(t)$ is not invertible, an analogous change of variables leads to a Lagrangian $L(t,x,p)$ that fails to satisfy the classical conditions (strict convexity and superlinearity in $p$). Therefore, in this case, the existence and uniqueness of an optimal transport map $T$ has to be established independently. We do this for the case where $\tilde L(t,x,u)=\|u\|^2/2$ corresponding to power.

We now formulate the corresponding stochastic control problem. The system dynamics
	\begin{equation}\label{eq:priorsys}
	 	\dot x(t)=A(t)x(t)+B(t)u(t),
	\end{equation}
are assumed to be controllable and the initial state $x(0)$ to be a random vector with probability density $\rho_0$. We seek a minimum energy continuous feedback control law $u(t,x)$ that steers the system
to a final state $x(1)$ having distribution $\rho_1(x)dx$. That is, we address the following:
    \begin{subequations}\label{eq:priorstochastic}
    \begin{eqnarray}
        && \inf_{u\in \cU} \mE \left\{\int_{\tstart}^{\tend}\frac{1}{2}\|u(t,x^u)\|^2dt\right\},
        \\
        && \dot x^u(t)=A(t)x^u(t)+B(t)u(t),
        \\
        && x^u(0) \sim \mu_0, \quad x^u(1) \sim \mu_1,
    \end{eqnarray}
    \end{subequations}
where $\cU$ is the family of continuous feedback control laws. Once again, this can be recast in a ``fluid-dynamics'' version in terms of the sought one-time probability density functions of the state vector:
    \begin{subequations}\label{eq:priorBB}
    \begin{eqnarray}\label{eq:priorBB1}
        && \inf_{(\nrho,u)} \int_{\mR^n}\int_{\tstart}^{\tend}\frac{1}{2}\|u(t,x)\|^2\nrho(t,x)dtdx,
        \\
        && \frac{\partial \nrho}{\partial t}+\nabla\cdot((A(t)x+B(t)u)\nrho)=0,\label{eq:priorBB2}
        \\
        && \nrho(\tstart,x)=\rho_0(x), \quad \nrho(\tend,y)=\rho_1(y).\label{eq:priorboundary}
    \end{eqnarray}
    \end{subequations}
Naturally, for the trivial prior dynamics $A(t)\equiv 0$ and $B(t)\equiv I$, the problem reduces to the classical OMT  and the solution $\{\nrho(t,\cdot)\mid 0\le t\le 1\}$ is the {\em displacement interpolation} of the two marginals \cite{McC97}.
In the rest of the section, we show directly that Problem \eqref{eq:priorBB} has a unique solution. 

\subsection{Solutions to OMT-wpd}

Let $\Phi(t_1,t_0)$ be the state transition matrix of \eqref{eq:priorsys} from $t_0$ to $t_1$, with $\Phi_{10}:=\Phi(1,0)$, and
    \[
        M_{10}:=M(\tend,\tstart)=\int_{\tstart}^{\tend}\Phi(\tend,t)B(t)B(t)'\Phi(\tend,t)'dt
    \]
be the controllability Gramian of the system. Recall \cite{LeeMar67,AthFal66} 
 that for linear dynamics \eqref{eq:priorsys} and given boundary conditions $x(0)=x$, $x(1)=y$,
the least energy and the corresponding control input can be given in closed-form, namely
    \begin{equation}\label{eq:minimumenergy}
        \inf \int_0^1\frac{1}{2}\|u(t)\|^2dt= \frac{1}{2}(y-\Phi_{10}x)'M_{10}^{-1}(y-\Phi_{10}x)
    \end{equation}
which is attained for
    \[
        u(t)=B(t)'\Phi(1,t)'M_{10}^{-1}(y-\Phi_{10}x),
    \]
and the corresponding optimal trajectory
    \begin{equation}\label{eq:priorgeodesic}
        x(t)=\Phi(t,1)M(1,t)M_{10}^{-1}\Phi_{10}x+M(t,0)\Phi(1,t)'M_{10}^{-1}y.
    \end{equation}
Problem \eqref{eq:priorstochastic} can now be written as
    \begin{subequations}\label{eq:priorBBB}
    \begin{eqnarray}
        && \label{eq:priorBBB1} \inf_\pi \int_{\mR^n\times\mR^n}\frac{1}{2}(y-\Phi_{10}x)'M_{10}^{-1}(y-\Phi_{10}x) d\pi(x,y)
        \\
        && \label{eq:priorBBB2}
        \int_{dx}\int_{y\in \mR^n} d\pi(x,y)=\rho_0(x)dx,~~\int_{dy}\int_{x\in \mR^n} d\pi(x,y)=\rho_1(y)dy,
    \end{eqnarray}
    \end{subequations}
where $\pi$ is a measure on $\mR^n\times \mR^n$.

Problem \eqref{eq:priorBBB} can be converted to the Kantorovich formulation \eqref{eq:OptTrans} of the OMT by a transformation of coordinates. Indeed, consider the linear map
    \begin{equation}\label{eq:coordtrans}
        C: \left[\begin{array}{c} x \\y \end{array}\right]\longrightarrow
        \left[\begin{array}{c} \hat{x} \\\hat{y} \end{array}\right]=\left[\begin{array}{c} M_{10}^{-1/2}\Phi_{10}x \\M_{10}^{-1/2}y \end{array}\right]
    \end{equation}
and set
\[
    \hat\pi=C_{\sharp}\pi.
\]
Clearly, (\ref{eq:priorBBB1}-\ref{eq:priorBBB2}) become
    \begin{subequations}\label{eq:priorBBBB}
    \begin{equation}
        \inf_{\hat\pi} \int_{\mR^n\times\mR^n}\frac{1}{2}\|\hat{y}-\hat{x}\|^2d\hat\pi(\hat{x},\hat{y})
    \end{equation}
    \begin{equation}
        \int_{d\hat{x}}\int_{\hat{y}\in \mR^n} d\hat\pi(\hat{x},\hat{y})=\hat\rho_0(\hat{x})d\hat{x},~~
        \int_{d\hat{y}}\int_{\hat{x}\in \mR^n} d\hat\pi(\hat{x},\hat{y})=\hat\rho_1(\hat{y})d\hat{y},
    \end{equation}
    \end{subequations}
where
    \begin{eqnarray*}
        \hat\rho_0(\hat{x})&=&|M_{10}|^{1/2}|\Phi_{10}|^{-1}\rho_0(\Phi_{10}^{-1}M_{10}^{1/2}\hat{x})\\
        \hat\rho_1(\hat{y})&=&|M_{10}|^{1/2}\rho_1(M_{10}^{1/2}\hat{y}).
    \end{eqnarray*}
Problem \eqref{eq:priorBBBB} is now a standard  OMT with quadratic cost function and we know that the optimal transport map $\hat T$ for this problem exists.  It is the gradient of a convex function $\phi$, i.e.,
    \begin{equation}\label{eq:priorphi}
        \hat{T}=\nabla \phi,
    \end{equation}
and  the optimal $\hat\pi$ is concentrated on the graph of $\hat{T}$  \cite{Bre91}. The solution to the original problem \eqref{eq:priorBBBB} can now be determined using $\hat T$, and it is
	\begin{equation}\label{eq:monotoneforoneD}
		y=T(x)=M_{10}^{1/2}\hat T(M_{10}^{-1/2}\Phi_{10}x).
	\end{equation}
The one-time marginals can be readily computed as the push-forward
    \begin{subequations}\label{eq:priorinterpolation}
    \begin{eqnarray}
        \mu_t&=&(T_t)_\sharp \mu_0,
         \end{eqnarray}
        where
            \begin{eqnarray}
        T_t(x)&=&\Phi(t,1)M(1,t)M_{10}^{-1}\Phi_{10}x+M(t,0)\Phi(1,t)'M_{10}^{-1}T(x),
    \end{eqnarray}
and
    \begin{equation}\label{eq:priordisplaceinterp}
        \nrho(t,x)=d\mu_t(x)/dx.
    \end{equation}
    \end{subequations}
In this case, we refer to the parametric family of one-time marginals as {\em displacement interpolation with prior dynamics}.

\subsection{Variational analysis}\label{sec:variational_omtwp}
In this section we present a variational analysis for the OMT-wpd \eqref{eq:priorBB} analogous to that for the OMT problem \cite{BenBre00}, \cite[Section II]{CheGeoPav14e}. The analysis provides conditions for a pair $(\rho(\cdot,\cdot),v(\cdot,\cdot))$ to be a solution to OMT-wpd and will be used in Section \ref{sec:gaussian} to prove optimality of the solution of the OMT-wpd with Gaussian marginals.

Let $\cP_{\rho_0\rho_1}$ be the family of flows of probability densities
satisfying the boundary conditions and $\cU$ be the family of continuous feedback control laws $u(\cdot,\cdot)$. Consider the unconstrained minimization over $\cP_{\rho_0\rho_1}\times\cU$ of the Lagrangian
    \begin{equation}\label{eq:priorlagrangian}
        \mathcal L(\nrho,u,\lambda)=
        \int_{\mR^n}\int_{\tstart}^{\tend}\left[\frac{1}{2}\|u(t,x)\|^2\nrho(t,x)+\lambda(t,x)\left(\frac{\partial \nrho}{\partial t}+\nabla\cdot((A(t)x+B(t)u)\nrho)\right)\right]dtdx,
    \end{equation}
where $\lambda$ is a $C^1$ Lagrange multiplier. After integration by parts, assuming that limits for $x\rightarrow\infty$ are zero, and observing that the boundary values are constant over $\mathcal P_{\rho_0\rho_1}$, we get the problem
    \begin{equation}\label{eq:priorlagrangian2}
        \inf_{(\nrho,u)\in\cP_{\rho_0\rho_1}
        \times\cU}\int_{\R^n}\int_{\tstart}^{\tend}\left[\frac{1}{2}\|u(t,x)\|^2+\left(-\frac{\partial \lambda}{\partial t}-\nabla\lambda\cdot (A(t)x+B(t)u)\right)\right]\nrho(t,x)dtdx.
    \end{equation}
Pointwise minimization with respect to $u$ for each fixed flow of probability densities $\nrho$ gives
    \begin{equation}\label{eq:prioroptcond}
        u^*(t,x)=B(t)'\nabla\lambda(t,x).
    \end{equation}
Substituting into (\ref{eq:priorlagrangian2}), we get
    \begin{equation}\label{eq:priorlagrangian3}
        J(\nrho,\lambda)=-\int_{\R^n}\int_{\tstart}^{\tend}\left[\frac{\partial \lambda}{\partial t}+A(t)x\cdot\nabla\lambda+\frac{1}{2}\nabla\lambda\cdot B(t)B(t)'\nabla\lambda\right]\nrho(t,x)dtdx.
    \end{equation}
As in Section \ref{sec:var1}, we get the following sufficient conditions for optimality:
\begin{proposition}\label{prop:sufficientconditions}
Consider $\nrho^*$ that satisfies
    \begin{subequations}\label{eq:priorsufficient}
    \begin{equation}\label{eq:prioroptev}
        \frac{\partial \nrho^*}{\partial t}+\nabla\cdot[(A(t)x+B(t)B(t)'\nabla\psi)\nrho^*]=0, \quad \nrho^*(\tstart,x)=\rho_0(x),
    \end{equation}
where $\psi$ is a solution of the Hamilton-Jacobi equation
    \begin{equation}\label{eq:hamiltonjacobi}
    \frac{\partial \psi}{\partial t}+x'A(t)'\nabla\psi+\frac{1}{2}\nabla\psi' B(t)B(t)'\nabla\psi=0.
    \end{equation}
If in addition
    \begin{equation}\label{eq:terminalcondition}
        \nrho^*(\tend,x)=\rho_1(x),
    \end{equation}
    \end{subequations}
then the pair $\left(\nrho^*(t,x),u^*(t,x)=B(t)'\nabla\psi(t,x)\right)$ is a solution to problem~\eqref{eq:priorBB}.
\end{proposition}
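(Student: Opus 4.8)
The plan is to run the standard verification (completion-of-squares) argument, exactly parallel to the one behind the Proposition in Section~\ref{sec:var1}, now carrying the extra drift term $A(t)x$ along. Let $(\nrho,u)\in\cP_{\rho_0\rho_1}\times\cU$ be an arbitrary admissible pair and let $\psi$ be as in the statement. First I would multiply the continuity equation \eqref{eq:priorBB2} by $\psi$, integrate over $\R^n\times[\tstart,\tend]$, and integrate by parts in both $t$ and $x$; assuming the boundary contributions at $x\to\infty$ vanish, this yields
\[
\int_{\R^n}\!\!\int_{\tstart}^{\tend}\Big(\tfrac{\partial\psi}{\partial t}+(A(t)x+B(t)u)\cdot\nabla\psi\Big)\nrho\,dt\,dx
=\int_{\R^n}\psi(\tend,x)\rho_1(x)\,dx-\int_{\R^n}\psi(\tstart,x)\rho_0(x)\,dx=:\mathcal C,
\]
a quantity that depends only on the fixed end-point marginals, hence is the same for every admissible flow.

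Next I would add $0=\mathcal C-\mathcal C$ to the cost functional \eqref{eq:priorBB1} and rewrite the integrand pointwise as
\[
\tfrac12\|u\|^2-\tfrac{\partial\psi}{\partial t}-x'A(t)'\nabla\psi-\nabla\psi'B(t)u
=\tfrac12\big\|u-B(t)'\nabla\psi\big\|^2-\Big(\tfrac{\partial\psi}{\partial t}+x'A(t)'\nabla\psi+\tfrac12\nabla\psi'B(t)B(t)'\nabla\psi\Big),
\]
the key algebraic step being completion of the square in $u$, which produces precisely the Hamilton--Jacobi combination \eqref{eq:hamiltonjacobi}. Invoking \eqref{eq:hamiltonjacobi}, the parenthesized term vanishes identically, so
\[
\int_{\R^n}\!\!\int_{\tstart}^{\tend}\tfrac12\|u\|^2\nrho\,dt\,dx
=\mathcal C+\int_{\R^n}\!\!\int_{\tstart}^{\tend}\tfrac12\big\|u(t,x)-B(t)'\nabla\psi(t,x)\big\|^2\nrho(t,x)\,dt\,dx\ \ge\ \mathcal C,
\]
with equality if and only if $u(t,x)=B(t)'\nabla\psi(t,x)$ $\nrho$-a.e. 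Finally I would observe that the candidate $(\nrho^*,u^*)$ with $u^*=B'\nabla\psi$ is admissible — equation \eqref{eq:prioroptev} is exactly the continuity equation \eqref{eq:priorBB2} for this choice of $u$, and \eqref{eq:terminalcondition} supplies the missing terminal marginal — and that it annihilates the square term, thereby attaining the lower bound $\mathcal C$. Hence $(\nrho^*,u^*)$ is a minimizer of \eqref{eq:priorBB}.

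The only genuine subtlety here is analytic rather than algebraic: the integration by parts requires sufficient decay of $\psi$, $\nabla\psi$ and $\nrho$ at spatial infinity (and enough regularity of $\psi$ — here taken to be $C^1$) for the boundary terms to drop and for all the integrals in sight to be finite; these conditions also guarantee that the cost of the competitor $(\nrho,u)$ is well defined when we subtract $\mathcal C$. I would either impose these as standing hypotheses, as is effectively done for \eqref{eq:sufficient}, or simply note that the proposition will be applied in the Gaussian setting of Section~\ref{sec:gaussian}, where $\psi$ is quadratic and $\nrho$ Gaussian, so that every manipulation above is manifestly legitimate.
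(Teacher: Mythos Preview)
Your proof is correct and is essentially the same argument the paper gives in Section~\ref{sec:variational_omtwp}: the paper introduces the Lagrange multiplier $\lambda$, integrates by parts to obtain \eqref{eq:priorlagrangian2}, and minimizes pointwise in $u$ to get $u^*=B'\nabla\lambda$ and the residual \eqref{eq:priorlagrangian3}, then invokes the Hamilton--Jacobi equation --- exactly your completion-of-squares verification with $\lambda=\psi$, just packaged in Lagrangian language. Your remark on the analytic caveats (decay at infinity, $C^1$ regularity of $\psi$) matches the paper's standing assumption ``assuming that limits for $x\rightarrow\infty$ are zero'' preceding \eqref{eq:priorlagrangian2}.
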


It turns out that \eqref{eq:priorsufficient} always admits a solution. In fact, one solution can be constructed as follows.
\begin{proposition}
Given dynamics \eqref{eq:priorsys} and marginal distributions $\mu_0(dx)=\rho_0(x)dx, \mu_1(dx)=\rho_1(x)dx$, let $\psi(t,x)$ be defined by the formula
    \begin{equation}\label{eq:priorpsi}
        \psi(t,x)=\inf_y \left\{\psi(0,y)+\frac{1}{2}(x-\Phi(t,0)y)'M(t,0)^{-1}(x-\Phi(t,0)y)\right\}
    \end{equation}
with
    \begin{equation*}
        \psi(0,x)=\phi(M_{10}^{-1/2}\Phi_{10}x)-\frac{1}{2}x'\Phi_{10}'M_{10}^{-1}\Phi_{10}x
    \end{equation*}
and $\phi$ as in \eqref{eq:priorphi}. Moreover, let $\rho(t,\cdot)=(T_t)_\sharp \rho_0$ be the displacement interpolation as in \eqref{eq:priorinterpolation}. Then this pair $(\psi, \rho)$ is a solution to \eqref{eq:priorsufficient}.
\end{proposition}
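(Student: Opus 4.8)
The plan is to verify, in order, the three defining relations of \eqref{eq:priorsufficient}: that $\psi$ from \eqref{eq:priorpsi} solves the Hamilton--Jacobi equation \eqref{eq:hamiltonjacobi}, that $\rho$ from \eqref{eq:priorinterpolation} solves the continuity equation \eqref{eq:prioroptev} with the velocity field $B(t)B(t)'\nabla\psi$ and the correct initial datum, and that $\rho(1,\cdot)=\rho_1$. The central bookkeeping device is the coordinate change $C$ of \eqref{eq:coordtrans}: under $C$ the whole problem becomes a standard OMT, and so we should expect $\psi$ to pull back, up to the indicated quadratic correction, the Hopf--Lax viscosity solution $\hat\psi$ of the flat Hamilton--Jacobi equation \eqref{eq:HJclass} whose initial value is $\hat\psi(0,\hat x)=\phi(\hat x)-\tfrac12\|\hat x\|^2$, exactly as recorded at the end of Section~\ref{sec:var1}.

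First I would establish the Hamilton--Jacobi claim. The infimum defining $\psi(t,x)$ in \eqref{eq:priorpsi} is precisely the value function of the minimum-energy control problem \eqref{eq:minimumenergy} run on $[0,t]$ instead of $[0,1]$ plus the terminal cost $\psi(0,\cdot)$, reindexed so that time flows backward from the boundary datum at $t=0$; equivalently it is a Lax--Oleinik / Hopf--Lax formula for the Lagrangian $\tilde L=\tfrac12\|u\|^2$ with dynamics $\dot x=A(t)x+B(t)u$. The Hamiltonian associated with this Lagrangian and dynamics is $H(t,x,p)=x'A(t)'p+\tfrac12 p'B(t)B(t)'p$ — obtained by Legendre duality in $u$, the maximizing $u$ being $u=B(t)'p$, which is also the origin of \eqref{eq:prioroptcond}. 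Standard optimal-control / viscosity theory (e.g.\ \cite{FleRis75}) then gives that such a value function solves $\partial_t\psi+H(t,x,\nabla\psi)=0$ in the viscosity sense, which is exactly \eqref{eq:hamiltonjacobi}. Alternatively, and perhaps more transparently, one substitutes $x=\Phi(t,0)$-conjugated coordinates and completes the square so that \eqref{eq:priorpsi} reads $\psi(t,x)=\hat\psi(t,M(t,0)^{-1/2}\Phi(t,0)\,\cdot)$-type expression plus the quadratic term in $\psi(0,\cdot)$ propagated forward, reducing \eqref{eq:hamiltonjacobi} to \eqref{eq:HJclass} for the already-known Hopf--Lax solution $\hat\psi$; one then checks the quadratic correction $-\tfrac12 x'\Phi(t,0)^{?}\cdots$ satisfies the residual linear PDE, a Riccati-type identity for $M(t,0)$ and $\Phi(t,0)$ that is a routine (if slightly lengthy) differentiation.

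Next I would treat the continuity equation. By construction $\mu_t=(T_t)_\sharp\mu_0$ with $T_t$ as in \eqref{eq:priorinterpolation}, so it suffices to show that the velocity of the flow $t\mapsto T_t(x)$, namely $\dot T_t(x)$, equals $A(t)T_t(x)+B(t)B(t)'\nabla\psi(t,T_t(x))$; pushing forward the trivial transport equation for $\mu_0$ along this flow then yields \eqref{eq:prioroptev}, and $T_0=\mathrm{id}$ gives the initial datum. Differentiating \eqref{eq:priorgeodesic}/\eqref{eq:priorinterpolation} in $t$ using $\partial_t\Phi(t,s)=A(t)\Phi(t,s)$, $\partial_t M(t,s)=A(t)M(t,s)+M(t,s)A(t)'+B(t)B(t)'$ (and the analogous identities for $M(1,t)$) produces, after cancellation, precisely $A(t)x(t)+B(t)u^*(t)$ with $u^*(t)=B(t)'\Phi(1,t)'M_{10}^{-1}(y-\Phi_{10}x)$ evaluated at $y=T(x)$ — this is just the statement that \eqref{eq:priorgeodesic} is the optimal trajectory, which we are allowed to invoke. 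It then remains to identify $B(t)'\Phi(1,t)'M_{10}^{-1}(T(x)-\Phi_{10}x)$ with $B(t)B(t)'\nabla\psi(t,T_t(x))$, i.e.\ $\Phi(1,t)'M_{10}^{-1}(T(x)-\Phi_{10}x)=\nabla\psi(t,T_t(x))$ modulo $\ker B(t)'$; this is the first-order optimality (envelope) relation for the infimum in \eqref{eq:priorpsi}, where the optimal $y$ at the point $x(t)=T_t(x)$ is exactly the original endpoint data, tying $\nabla\psi$ to the adjoint/costate of the optimal control problem. Finally, $\rho(1,\cdot)=\rho_1$ is immediate since $T_1=T$ and $T_\sharp\mu_0=\mu_1$ by the defining property \eqref{eq:monotoneforoneD} of $T$ inherited from the solved OMT \eqref{eq:priorBBBB}.

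The main obstacle is regularity: $\psi$ is in general only a viscosity (Hopf--Lax) solution of \eqref{eq:hamiltonjacobi}, and $\phi$ from \eqref{eq:priorphi} is merely convex, so $\nabla\psi$ and $\nabla\psi$ along the flow exist only almost everywhere, and the continuity equation holds only weakly. Handling this cleanly requires the Brenier/McCann regularity theory for the underlying flat OMT (differentiability of convex potentials $\mu_0$-a.e., invertibility of $T_t$ for $t<1$, absolute continuity of $\mu_t$) together with the fact that the Hopf--Lax formula gives a semiconcave function whose superdifferential is single-valued along optimal trajectories. I would quote these facts from \cite{Vil03,Leo13} rather than reprove them, mirroring exactly the treatment given for the $A\equiv0$, $B\equiv I$ case at the end of Section~\ref{sec:var1}; the only genuinely new content is the algebraic reduction via $C$ and the Riccati identities for $M(t,s)$, $\Phi(t,s)$, which are elementary.
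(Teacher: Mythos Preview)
Your proposal is correct and follows essentially the same architecture as the paper's proof: the Hamilton--Jacobi equation via a Bellman/value-function (Hopf--Lax) reading of \eqref{eq:priorpsi}, the continuity equation via the velocity-field identity $\dot T_t = A(t)T_t + B(t)B(t)'\nabla\psi(t,T_t)$, and the terminal condition via $T_1=T$. The one substantive step the paper makes explicit and you leave implicit is the convexity in $y$ of the Hopf--Lax objective --- equivalently, the positive semidefiniteness of $\Phi(t,0)'M(t,0)^{-1}\Phi(t,0)-\Phi_{10}'M_{10}^{-1}\Phi_{10}$ --- which is precisely what legitimizes your ``first-order optimality (envelope)'' identification of the minimizer at $T_t(x)$ with the initial point $x$; without it the stationary point need not be the infimum and the computation of $\nabla\psi(t,T_t(x))$ would not close.
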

\begin{proof}
First we show that \eqref{eq:priorpsi} satisfies \eqref{eq:hamiltonjacobi}. Let $H(t,x,\nabla\psi)$ be the Hamiltonian of the Hamilton-Jacobi equation \eqref{eq:hamiltonjacobi}, that is,
    \[
        H(t,x,\nabla \psi)=x'A(t)'\nabla \psi+\frac{1}{2}\nabla \psi'B(t)B(t)'\nabla \psi,
    \]
and define
    \begin{eqnarray*}
        L(t,x,v)&=&\sup_{p} \{p\cdot v-H(t,x,p)\}\\
        &=&
        \begin{cases}
        {\frac{1}{2}(v-A(t)x)'(B(t)B(t)')^\dagger (v-A(t)x)} & \text{if}~ v-A(t)x\in\range(B)\\
        {\infty} & \text{otherwise},
        \end{cases}
    \end{eqnarray*}
where $\dagger$ denotes pseudo-inverse and $\range(\cdot)$ denotes ``the range of''. Then the {\em Bellman principle of optimality} \cite{FleRis75} yields a particular solution of \eqref{eq:hamiltonjacobi}
    \begin{eqnarray*}
        \psi(t,x)&=&\inf_y \left\{\psi_0(y)+\int_0^t L(\tau,\xi(\tau),\dot\xi(\tau)),~~\xi(t)=x, \xi(0)=y\right\}\\
        &=& \inf_y \left\{\psi_0(y)+\frac{1}{2}(x-\Phi(t,0)y)'M(t,0)^{-1}(x-\Phi(t,0)y)\right\}.
    \end{eqnarray*}
This shows that \eqref{eq:priorpsi} is indeed a solution of \eqref{eq:hamiltonjacobi}.

Next we show $(\psi, \rho)$ is a (weak) solution of \eqref{eq:prioroptev}. Define
    \[
        v(t,x)=A(t)x+B(t)B(t)'\nabla\psi(t,x),
    \]
then \eqref{eq:prioroptev} becomes a linear transport equation
    \begin{equation}\label{eq:lineartransport}
        \frac{\partial \rho}{\partial t}+\nabla\cdot (v\rho)=0,
    \end{equation}
with velocity field $v(t,x)$.
We claim 
    \[
        v(t,\cdot)\circ T_t=dT_t/dt,
    \] 
that is, $v(t,x)$ is the velocity field associated with the trajectories $(T_t)$. If this claim is true, then the linear transport equation \eqref{eq:lineartransport} follows from a standard argument \cite[p.\ 167]{Vil03}. Moreover, the terminal condition \eqref{eq:terminalcondition} follows since $\rho(1,\cdot)=T_\sharp \rho_0$.
We next prove the claim.
Formula \eqref{eq:priorpsi} can be rewritten as
    \[
        g(x)=\sup_y\left\{x'M(t,0)^{-1}\Phi(t,0)y-f(y)\right\},
    \]
with
    \begin{eqnarray*}
        g(x) &=& \frac{1}{2}x'M(t,0)^{-1}x-\psi(t,x)\\
        f(y) &=& \frac{1}{2}y'\Phi(t,0)'M(t,0)^{-1}\Phi(t,0)y+\psi(0,y).
    \end{eqnarray*}
The function
    \begin{eqnarray*}
        f(y)&=&\frac{1}{2}y'\Phi(t,0)'M(t,0)^{-1}\Phi(t,0)y+\psi(0,y)\\
        &=&\frac{1}{2}y'\left[\Phi(t,0)'M(t,0)^{-1}\Phi(t,0)-\Phi_{10}'M_{10}^{-1}\Phi_{10}\right]y+\phi(M_{10}^{-1/2}\Phi_{10}y)
    \end{eqnarray*}
is convex since $\phi$ is convex and the matrix
    \begin{eqnarray*}
        \Phi(t,0)'M(t,0)^{-1}\Phi(t,0)-\Phi_{10}'M_{10}^{-1}\Phi_{10}
        &=&
        \left(\int_0^t\Phi(0,\tau)B(\tau)B(\tau)'\Phi(0,\tau)'d\tau\right)^{-1}\\
        &&-\left(\int_0^1\Phi(0,\tau)B(\tau)B(\tau)'\Phi(0,\tau)'d\tau\right)^{-1}
    \end{eqnarray*}
is positive semi-definite. Hence, from a similar argument to the case of Legendre transform, we obtain
    \[
        \nabla g\circ(M(t,0)\Phi(0,t)'\nabla f)=M(t,0)^{-1}\Phi(t,0).
    \]
It follows
    \[
        (M(t,0)^{-1}-\nabla\psi(t,\cdot))\circ\left\{M(t,0)\Phi(0,t)'\left[\Phi(t,0)'M(t,0)^{-1}\Phi(t,0)x
        +\nabla\psi(0,x)\right]\right\}=M(t,0)^{-1}\Phi(t,0)x.
    \]
After some cancellations it yields
    \[
        \nabla\psi(t,\cdot)\circ \Phi(t,0)x+\nabla\psi(t,\cdot)\circ M(t,0)\Phi(0,t)'\nabla\psi(0,x)-\Phi(0,t)'\nabla\psi(0,x)=0.
    \]
On the other hand, since
    \[
        T(x)=M_{10}^{-1/2}\nabla\phi(M_{10}^{-1/2}\Phi_{10}x)
        =M_{10}\Phi_{01}'\nabla\psi(0,x)+\Phi_{10}x,
    \]
we have 
    \begin{eqnarray*}
        T_t(x)&=&\Phi(t,1)M(1,t)M_{10}^{-1}\Phi_{10}x+M(t,0)\Phi(1,t)'M_{10}^{-1}T(x)
        \\&=&
        \Phi(t,0)x+M(t,0)\Phi(0,t)'\nabla\psi(0,x),
    \end{eqnarray*}
from which it follows 
    \[
        \frac{dT_t(x)}{dt}=A(t)\Phi(t,0)x+A(t)M(t,0)\Phi(0,t)'\nabla\psi(0,x)+B(t)B(t)'\Phi(0,t)'\nabla\psi(0,x).
    \]
Therefore,
    \begin{eqnarray*}
        v(t,\cdot)\circ T_t(x)-\frac{dT_t(x)}{dt}
        &=&
        \left[A(t)+B(t)B(t)'\nabla\psi(t,\cdot)\right]\circ\left[\Phi(t,0)x+M(t,0)\Phi(0,t)'\nabla\psi(0,x)\right]
        \\&&-\left[A(t)\Phi(t,0)x+A(t)M(t,0)\Phi(0,t)'\nabla\psi(0,x)+B(t)B(t)'\Phi(0,t)'\nabla\psi(0,x)\right]
        \\&=& B(t)B(t)'\left\{\nabla\psi(t,\cdot)\circ \Phi(t,0)x+\nabla\psi(t,\cdot)\circ M(t,0)\Phi(0,t)'\nabla\psi(0,x)\right.
        \\&& \left.-\Phi(0,t)'\nabla\psi(0,y)\right\}
        \\&=&0,
    \end{eqnarray*}
which completes the proof.
\end{proof}

\section{Schr\"{o}dinger bridges and their zero-noise limit}\label{sec:slowingdown}

In 1931/32, Schr\"odinger \cite{Sch31,Sch32} treated the following problem: A large number N of i.i.d. Brownian particles in $\mR^n$ is observed to have at time $t=0$ an empirical distribution approximately equal to $\rho_0(x)dx$, and at some later time $t=1$ an empirical distribution approximately equal to $\rho_1(x)dx$. Suppose that $\rho_1(x)$ considerably differs from what it should be according to the law of large numbers, namely
    \[
        \int q^B(0,x,1,y)\rho_0(x)dx,
    \]
where
    \[
        q^B(s,x,t,y)=(2\pi)^{-n/2}(t-s)^{-n/2}\exp\left(-\frac{\|x-y\|^2}{2(t-s)}\right)
    \]
denotes the Brownian transition probability density. It is apparent that the particles have been transported in an unlikely way. But of the many unlikely ways in which this could have happened, which one is the most likely? The process that is consistent with the observed marginals and fulfils Schr\"odinger's requirement is referred to as the Schr\"odinger bridge.

This problem has a long history \cite{Wak90}. In particular, F\"ollmer \cite{Fol88} showed that the solution to Schr\"odinger's problem corresponds to a probability law $\cP^B$  on path space that minimizes the relative entropy with respect to the Wiener measure among all laws with given initial and terminal distributions, $\rho_0(x)dx$ and $\rho_1(x)dx$, respectively, and proved that the minimizer always exists.
Beurling \cite{Beu60} and Jamison \cite{Jam74} generalized the idea of the Schr\"odinger bridge by changing the Wiener measure to a more general reference measure induced by a Markov process. Jamison's result is stated below.

\begin{thm}\label{thm:schrodinger}
Given two probability measures $\mu_0(dx)=\rho_0(x)dx$ and $\mu_1(dx)=\rho_1(x)dx$ on $\mR^n$ and the continuous, everywhere positive Markov kernel $q(s,x,t,y)$, there exists a unique pair of $\sigma$-finite measure $(\hat\varphi_0(x) dx,\varphi_1(x) dx)$ on $\mR^n$ such that the measure $\cP_{01}$ on $\mR^n\times\mR^n$ defined by
    \begin{equation}\label{eq:distributionjoint}
        \cP_{01}(E)=\int_E q(0,x,1,y)\hat\varphi_0(x)\varphi_1(y)dxdy
    \end{equation}
has marginals $\mu_0$ and $\mu_1$. Furthermore, the Schr\"odinger bridge from $\mu_0$ to $\mu_1$ is determined via the distribution flow
    \begin{subequations}\label{eq:distributionflow}
    \begin{equation}\label{eq:distributionflow1}
        \cP_t(dx)=\varphi(t,x)\hat{\varphi}(t,x)dx
    \end{equation}
with
    \begin{eqnarray}
        \varphi(t,x)&=&\int q(t,x,1,y)\varphi_1(y)dy\\
        \hat{\varphi}(t,x)&=& \int q(0,y,t,x) \hat{\varphi}_0(y)dy.
    \end{eqnarray}
    \end{subequations}
\end{thm}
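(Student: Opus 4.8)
The plan is to reduce the statement to the solvability of the \emph{Schr\"odinger system} of two coupled integral equations, settle existence and uniqueness there, and then read off the distribution flow from the semigroup (Chapman--Kolmogorov) property of $q$.

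\emph{Reduction.} Put $K(x,y):=q(0,x,1,y)$. Computing the two marginals of \eqref{eq:distributionjoint} shows that $\cP_{01}$ has marginals $\mu_0$ and $\mu_1$ precisely when the densities $(\hat\varphi_0,\varphi_1)$ satisfy
\[
\hat\varphi_0(x)\int_{\mR^n}K(x,y)\varphi_1(y)\,dy=\rho_0(x),\qquad
\varphi_1(y)\int_{\mR^n}K(x,y)\hat\varphi_0(x)\,dx=\rho_1(y).
\]
Granting a solution, define $\varphi(t,x)$ and $\hat\varphi(t,x)$ as in \eqref{eq:distributionflow}. The Chapman--Kolmogorov identity for $q$ gives $\varphi(0,x)\hat\varphi_0(x)=\rho_0(x)$ and $\varphi_1(x)\hat\varphi(1,x)=\rho_1(x)$, while the $h$-transform structure (the bridge is the Markov process with transition density $q(s,x,t,y)\varphi(t,y)/\varphi(s,x)$ and initial law $\rho_0\,dx$) propagates this to $\cP_t(dx)=\varphi(t,x)\hat\varphi(t,x)dx$ for all $t$, with the correct endpoints. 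Thus everything reduces to solving the Schr\"odinger system.

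\emph{Existence and uniqueness.} Let $R_{01}(dx\,dy)$ be the two-point reference measure with ``density'' $q(0,x,1,y)$ and consider $\min\{\,D(\pi\,\|\,R_{01}) : \pi\in\Pi(\mu_0,\mu_1)\,\}$, the minimum relative-entropy (F\"ollmer/Sanov) formulation. Under a mild integrability hypothesis ensuring the feasible set contains at least one coupling of finite relative entropy (automatic in the linear-Gaussian setting treated later in the paper), the functional $\pi\mapsto D(\pi\,\|\,R_{01})$ is strictly convex, weakly lower semicontinuous, and has weakly compact sublevel sets over the tight convex set $\Pi(\mu_0,\mu_1)$; hence a \emph{unique} minimizer $\pi^\star$ exists. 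Dualizing the two marginal constraints --- equivalently, a cyclical-monotonicity argument on the support of $\pi^\star$ --- shows that $d\pi^\star/dR_{01}$ factors as a product $\hat\varphi_0(x)\varphi_1(y)$, with $\hat\varphi_0,\varphi_1$ everywhere positive (since $q>0$) and $\sigma$-finite; this is exactly a solution of the Schr\"odinger system. Uniqueness of $\pi^\star$ forces uniqueness of the product $\hat\varphi_0(x)\varphi_1(y)$, hence of the pair $(\hat\varphi_0\,dx,\varphi_1\,dx)$ up to the inessential rescaling $(\hat\varphi_0,\varphi_1)\mapsto(c\hat\varphi_0,c^{-1}\varphi_1)$, $c>0$, which leaves $\cP_{01}$ and the flow \eqref{eq:distributionflow} unchanged. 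As an alternative to the variational route one iterates $\varphi_1\mapsto\rho_1/\big(K^{*}(\rho_0/(K\varphi_1))\big)$, a composition of two order-reversing maps and hence order-preserving, and proves convergence either by Birkhoff's theorem in Hilbert's projective metric when $q$ has finite projective oscillation, or by the Fortet--Beurling compactness argument along an exhaustion of $\mR^n$ by bounded sets.

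\emph{Main obstacle.} The crux is the existence step, in two respects. First, one must guarantee that $\Pi(\mu_0,\mu_1)$ contains a coupling with finite relative entropy, which is not automatic for a general continuous, everywhere positive $q$ on the unbounded space $\mR^n$. Second, and more delicate, one must justify the product factorization $d\pi^\star/dR_{01}=\hat\varphi_0(x)\varphi_1(y)$ when the Lagrange multipliers are merely $\sigma$-finite rather than integrable --- a naive integrable-multiplier duality fails, and one must argue through supports or through a careful limiting procedure. In the iterative proof the corresponding difficulty is the absence of a uniform contraction on all of $\mR^n$, handled by truncation together with a compactness and normalization argument.
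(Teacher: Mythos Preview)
The paper does not prove this theorem at all: it is stated, without proof, as a classical result due to Beurling and Jamison (see the sentence immediately preceding the statement, ``Jamison's result is stated below''), with references to \cite{Beu60,Jam74,Fol88,For40}. So there is no ``paper's own proof'' to compare your proposal against.

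That said, your sketch is a reasonable summary of the standard routes in the literature. The relative-entropy/variational argument you outline is essentially F\"ollmer's approach, and the iterative/contraction argument is the Fortet--Beurling one; both lead to the Schr\"odinger system and the product factorization. Your honest flagging of the two genuine technical obstacles --- finiteness of the entropy for some coupling, and the passage from a unique minimizer to a product form when the multipliers are only $\sigma$-finite --- is accurate: these are exactly the points where the classical proofs do real work, and your outline does not actually carry them out. If you intend this as a self-contained proof rather than a pointer to the literature, those two steps would need to be filled in (or you should simply cite Jamison and Beurling, as the paper does).
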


The flow \eqref{eq:distributionflow} is referred to as the {\em entropic interpolation with prior $q$} between $\mu_0$ and $\mu_1$, or simply entropic interpolation, when it is clear what the Markov kernel $q$ is.
An efficient numerical algorithm to obtain the pair 
 $(\hat\varphi_0,\varphi_1)$ and thereby solve the Schr\"odinger bridge problem is given in \cite{CheGeoPav15a}.

For the case of non-degenerate Markov processes, a connection between the Schr\"odinger problem and stochastic optimal control was drawn by Dai Pra \cite{Dai91}. In particular, for the case of a Brownian kernel, he showed that
the one-time marginals $\rho(t,x)$ for Schr\"odinger's problem can be obtained as solutions to
    \begin{subequations}\label{eq:daipra}
    \begin{eqnarray}\label{eq:expectedcost}
        && \inf_{(\nrho,v)} \int_{\mR^n}\int_{\tstart}^{\tend}\frac{1}{2}\|v(t,x)\|^2\nrho(t,x)dtdx,
        \\
        && \frac{\partial \nrho}{\partial t}+\nabla\cdot(v\nrho)- \label{eq:fokkerplanck}
        \frac{1}{2}\Delta \nrho=0,
        \\
        && \nrho(\tstart,x)=\rho_0(x), \quad \nrho(\tend,y)=\rho_1(y).
    \end{eqnarray}
    \end{subequations}
Here, \eqref{eq:expectedcost} is the infimum of the expected cost while \eqref{eq:fokkerplanck} is the corresponding Fokker-Planck equation. The entropic interpolation is $ \cP_t(dx)=\rho(t,x)dx$.

An alternative equivalent reformulation given in \cite{CheGeoPav14e} is\begin{subequations}\label{eq:SBB}
    \begin{eqnarray}\label{SBB1}&&\inf_{(\rho,v)}\int_{\R^n}\int_{\tstart}^{\tend}\left[\frac{1}{2}\|v(x,t)\|^2+\frac{1}{8}\|\nabla\log\rho(x,t)\|^2\right]\rho(t,x)dtdx,\\&&\frac{\partial \rho}{\partial t}+\nabla\cdot(v\rho)=0,\label{SBB2}\\&& \rho(\tstart,x)=\rho_0(x), \quad \rho(\tend,y)=\rho_1(y),\label{Sboundary}
\end{eqnarray}
\end{subequations}
where the Laplacian in the dynamical constraint is traded for a ``Fisher information'' regularization term in the cost functional. Although the form in \eqref{eq:SBB} is quite appealing, for the purposes of this paper we will use only \eqref{eq:daipra}.
    
Formulation \eqref{eq:daipra} is quite similar to OMT \eqref{eq:BB} except for the presence of the Laplacian in \eqref{eq:fokkerplanck}.
It has been shown \cite{Mik04,MikThi08,Leo12,Leo13} that the OMT problem is, in a suitable sense, indeed the limit of the Schr\"{o}dinger problem when the diffusion coefficient of the reference Brownian motion goes to zero. In particular, the minimizers of the Schr\"{o}dinger problems converge to the unique solution of OMT as explained below.

\begin{thm}\label{thm:slowingdown}
 Given two probability measures $\mu_0(dx)=\rho_0(x)dx, \mu_1(dx)=\rho_1(x)dx$ on $\mR^n$ with finite second moment, let $\cP_{01}^{B,\epsilon}$ be the solution of the Schr\"{o}dinger problem with Markov kernel
    \begin{equation}\label{eq:browniankernel}
        q^{B,\epsilon}(s,x,t,y)=(2\pi)^{-n/2}((t-s)\epsilon)^{-n/2}\exp\left(-\frac{\|x-y\|^2}{2(t-s)\epsilon}\right)
    \end{equation}
and marginals $\mu_0, \mu_1$, and let $\cP_t^{B,\epsilon}$ be the corresponding entropic interpolation.
Similarly, let $\pi$ be the solution to the OMT problem \eqref{eq:OptTrans} with the same marginal distributions, and $\mu_t$ the corresponding displacement interpolation.  Then,  $\cP_{01}^{B,\epsilon}$ converges
weakly\footnote{A sequence $\{P_n\}$ of probability measures on a metric space ${\cal S}$ converges weakly to a measure $P$ if $\int_{\cal S}fdP_n\rightarrow\int_{\cal S}fdP$ for every bounded, continuous function $f$ on the space.}
to $\pi$ and $\cP_t^{B,\epsilon}$  converges weakly  to $\mu_t$, as $\epsilon$ goes to $0$.
\end{thm}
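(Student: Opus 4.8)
The plan is to prove the statement by a $\Gamma$-convergence argument, in the spirit of \cite{Mik04,MikThi08,Leo12,Leo13}: I will show that the suitably rescaled relative-entropy functional whose minimizer is $\cP_{01}^{B,\epsilon}$ converges, as $\epsilon\to0$, to the Kantorovich functional $\pi\mapsto\frac12\int\|x-y\|^2d\pi$, and then use uniqueness of the OMT solution to identify the limit.

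\emph{Static reduction.} By the structure recalled in Theorem~\ref{thm:schrodinger}, the path measure $\cP^{B,\epsilon}$ disintegrates as a mixture of $\sqrt{\epsilon}$-Brownian bridges indexed by the endpoint coupling $\cP_{01}^{B,\epsilon}$, and $\cP_{01}^{B,\epsilon}$ is the unique minimizer of the relative entropy $H(\pi\,\|\,R^\epsilon_{01})$ over $\pi\in\Pi(\mu_0,\mu_1)$, where $R^\epsilon_{01}(dx\,dy)=q^{B,\epsilon}(0,x,1,y)\,dx\,dy$ is $\sigma$-finite. Writing $d\pi=f\,dx\,dy$ and inserting the Gaussian form of $q^{B,\epsilon}$ gives the exact identity
\[
\epsilon\,H(\pi\,\|\,R^\epsilon_{01}) \;=\; \epsilon\!\int f\log f\,dx\,dy \;+\; \epsilon\,\tfrac n2\log(2\pi\epsilon) \;+\; \tfrac12\!\int\|x-y\|^2d\pi ,
\]
with the left side $+\infty$ when $\pi$ is not absolutely continuous. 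Since all competitors carry the fixed marginals $\mu_0,\mu_1$, which have finite second moment, their second moments are bounded uniformly in $\epsilon$, so $\int f\log f$ is bounded below uniformly by a Gaussian-entropy comparison; hence the first two terms tend to $0$. Also a finite-entropy competitor exists (e.g.\ a smooth coupling with the correct marginals built by a gluing construction), so the Schr\"odinger value is finite for each $\epsilon$.

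\emph{$\Gamma$-limit and endpoint convergence.} Tightness of $\{\cP_{01}^{B,\epsilon}\}$ is automatic because the marginals are fixed, so it suffices to identify all weak limit points. Liminf inequality: if $\pi^\epsilon\rightharpoonup\bar\pi$ along a subsequence, the displayed identity, the uniform lower bound on $\int f^\epsilon\log f^\epsilon$, and weak lower semicontinuity of $\pi\mapsto\int\|x-y\|^2d\pi$ give $\liminf_\epsilon \epsilon H(\pi^\epsilon\|R^\epsilon_{01})\ge\frac12\int\|x-y\|^2d\bar\pi$. Limsup/recovery: let $\pi^*$ be the unique OMT plan (concentrated on the graph of $\nabla\phi$, hence of infinite entropy); approximate it by smooth couplings $\pi^{(\delta)}$ with the exact marginals $\mu_0,\mu_1$, with $\int\|x-y\|^2d\pi^{(\delta)}\to\int\|x-y\|^2d\pi^*$ and finite (possibly diverging) entropy. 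For fixed $\delta$, $\epsilon H(\pi^{(\delta)}\|R^\epsilon_{01})\to\frac12\int\|x-y\|^2d\pi^{(\delta)}$, and a diagonal choice $\delta=\delta(\epsilon)\to0$ slowly enough that $\epsilon\,H(\pi^{(\delta(\epsilon))})\to0$ yields $\limsup_\epsilon \epsilon H(\cP_{01}^{B,\epsilon}\|R^\epsilon_{01})\le\frac12\int\|x-y\|^2d\pi^*$. Combining, any weak limit point $\bar\pi$ has the right marginals and cost $\le$ that of $\pi^*$, hence equals $\pi^*$ by uniqueness; as this holds along every subsequence, $\cP_{01}^{B,\epsilon}\rightharpoonup\pi=\pi^*$.

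\emph{Convergence of the interpolations, and the main difficulty.} From the bridge disintegration, $\cP_t^{B,\epsilon}=\int \cN\big((1-t)x+ty,\ \epsilon\,t(1-t)I\big)\,\cP_{01}^{B,\epsilon}(dx\,dy)$, so for bounded continuous $g$ one has $\int g\,d\cP_t^{B,\epsilon}=\int g_\epsilon\,d\cP_{01}^{B,\epsilon}$ with $g_\epsilon(x,y)=\E\,g\big((1-t)x+ty+\sqrt{\epsilon t(1-t)}\,Z\big)$, $Z\sim\cN(0,I)$. As $\epsilon\to0$, $g_\epsilon\to g\circ e_t$ uniformly on compacts with $\|g_\epsilon\|_\infty\le\|g\|_\infty$, where $e_t(x,y)=(1-t)x+ty$; together with $\cP_{01}^{B,\epsilon}\rightharpoonup\pi$ and tightness this gives $\int g\,d\cP_t^{B,\epsilon}\to\int g\circ e_t\,d\pi=\int g\,d\mu_t$, since $\mu_t=(e_t)_\sharp\pi$ is exactly McCann's displacement interpolation \eqref{eq:displacementinterp}. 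The main obstacle is the $\Gamma$-limsup step: because $\pi^*$ is singular, one must mollify it while restoring the exact marginals and then trade $\delta$ against $\epsilon$ in a diagonal argument; the uniform lower bound on $\int f\log f$ through second moments is the other place where the finite-second-moment hypothesis is essential.
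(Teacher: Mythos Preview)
The paper does not supply its own proof of Theorem~\ref{thm:slowingdown}: the result is quoted from the literature \cite{Mik04,MikThi08,Leo12,Leo13} and then \emph{used} as a black box in the Appendix to prove Theorem~\ref{thm:priorslowingdown} (the general linear-dynamics case is reduced to the Brownian case via the coordinate change \eqref{eq:coordtrans}, and then Theorem~\ref{thm:slowingdown} is invoked). So there is no in-paper argument to compare your proposal against.

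That said, your sketch is exactly in the spirit of the cited sources: the static reduction to an entropy minimization over $\Pi(\mu_0,\mu_1)$, the rescaling identity $\epsilon H(\pi\|R^\epsilon_{01})=\epsilon\int f\log f+\tfrac{n\epsilon}{2}\log(2\pi\epsilon)+\tfrac12\int\|x-y\|^2d\pi$, the $\Gamma$-convergence (liminf via weak lower semicontinuity of the quadratic cost plus the uniform lower bound on differential entropy coming from the fixed second moments; limsup via mollified couplings and a diagonal argument), and finally the bridge disintegration to pass from endpoint couplings to time-$t$ marginals. This is essentially L\'eonard's and Mikami's route, and the points you flag as delicate---restoring exact marginals after mollification, and the uniform entropy lower bound---are indeed the places where the work lies. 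Your argument is sound as a proof outline; it simply reconstructs what the paper chose to cite rather than reprove.
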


To build some intuition on the relation between OMT and Schr\"odinger bridges, consider
    \[
        dx(t)=\sqrt{\epsilon}dw(t)
    \]
with $w(t)$ being the standard Wiener process; the Markov kernel of $x(t)$ is $q^{B,\epsilon}$ in \eqref{eq:browniankernel}. The corresponding Schr\"odinger bridge problem with the law of $x(t)$ as prior, is equivalent to
    \begin{subequations}\label{eq:slowingdownbridge}
    \begin{eqnarray}\label{eq:slowingdownbridge1}
        && \inf_{(\nrho,v)} \int_{\mR^n}\int_{\tstart}^{\tend}\frac{1}{2\epsilon}\|v(t,x)\|^2\nrho(t,x)dtdx,
        \\
        && \frac{\partial \nrho}{\partial t}+\nabla\cdot(v\nrho)-
        \frac{\epsilon}{2}\Delta \nrho=0,\label{eq:slowingdownbridge2}
        \\
        && \nrho(\tstart,x)=\rho_0(x), \quad \nrho(\tend,y)=\rho_1(y).
    \end{eqnarray}
    \end{subequations}
Note that the solution exists for all $\epsilon$ and
coincides with the solution of the problem to minimize the cost functional
\[
\int_{\mR^n}\int_{\tstart}^{\tend}\frac{1}{2}\|v(t,x)\|^2\nrho(t,x)dtdx
\]
instead, i.e., ``rescaling'' \eqref{eq:slowingdownbridge1} by removing the factor $1/\epsilon$.
Now observe that the only difference between  \eqref{eq:slowingdownbridge} after removing the scaling $1/\epsilon$ in the cost functional and the OMT formulation \eqref{eq:BB} is the regularization term $\frac{\epsilon}{2}\Delta \nrho$ in \eqref{eq:slowingdownbridge2}. Thus, formally, the constraint \eqref{eq:slowingdownbridge2} becomes \eqref{eq:BB2} as $\epsilon$ goes to 0.
Below we discuss a general result that includes the case when the zero-noise limit of Schr\"odinger bridges corresponds to OMT with (linear) dynamics. This problem has been studied in \cite{Leo12} in a more abstract setting based on Large Deviation Theory \cite{DemZei09}. Here we consider the special case that is connected to our OMT-wpd formulation.
To this end, we begin with the Markov kernel corresponding to the process
    \[
        dx(t)=A(t)x(t)dt+\sqrt{\epsilon}B(t)dw(t).
    \]
The entropic interpolation $\cP_t(dx)=\nrho(t,x)dx$ can be obtained by solving (the ``rescaled'' problem)
    \begin{subequations}\label{eq:priorschrodingerbridge}
    \begin{eqnarray}\label{eq:priorschrodingerbridge1}
        && \inf_{(\nrho,u)} \int_{\mR^n}\int_{\tstart}^{\tend}\frac{1}{2}\|u(t,x)\|^2\nrho(t,x)dtdx,
        \\
        && \frac{\partial \nrho}{\partial t}+\nabla\cdot((A(t)x+B(t)u)\nrho)-
        \frac{\epsilon}{2}\sum_{i,j=1}^n\frac{\partial^2 (a(t)_{ij}\nrho)}{\partial x_i \partial x_j}=0,\label{eq:priorschrodingerbridge2}
        \\
        && \nrho(\tstart,x)=\rho_0(x), \quad \nrho(\tend,y)=\rho_1(y),\label{eq:priorboundarybridge}
    \end{eqnarray}
    \end{subequations}
where $a(t)=B(t)B(t)'$, see \cite{CheGeoPav14c,CheGeoPav14d}. This result represents a slight generalization of Dai Pra's result \cite{Dai91} in that the stochastic differential equation corresponding to (\ref{eq:priorschrodingerbridge2}) may be degenerate (i.e., $\rank(a(t))\neq n$).
Comparing \eqref{eq:priorschrodingerbridge} with \eqref{eq:priorBB} we see that the only difference is the extra term
    \[
        \frac{\epsilon}{2}\sum_{i,j=1}^n\frac{\partial^2 (a(t)_{ij}\nrho)}{\partial x_i \partial x_j}
    \]
in \eqref{eq:priorschrodingerbridge2} as compared to \eqref{eq:priorBB2}. Formally, \eqref{eq:priorschrodingerbridge2} converges to \eqref{eq:priorBB2} as $\epsilon$ goes to 0. This suggests that the minimizer of the OMT-wpd might be obtained as the limit of the joint initial-final time distribution of solutions to the Schr\"odinger bridge problems as the diffusivity goes to zero. This result is stated next and can be proved based on the result in \cite{Leo12} together with the Freidlin-Wentzell Theory \cite[Section 5.6]{DemZei09} (a large deviation principle on sample path space). In the Appendix, we also provide a direct proof which doesn't require a large deviation principle.

\begin{thm}\label{thm:priorslowingdown}
Given two probability measures $\mu_0(dx)=\rho_0(x)dx, \mu_1(dx)=\rho_1(x)dx$ on $\mR^n$ with finite second moment, let $\cP_{01}^\epsilon$ be the solution of the Schr\"{o}dinger problem with reference Markov process
    \begin{equation}\label{eq:priorprocess}
        dx(t)=A(t)x(t)dt+\sqrt{\epsilon}B(t)dw(t)
    \end{equation}
and marginals $\mu_0, \mu_1$, and let $\cP_t^\epsilon$ be the corresponding entropic interpolation. Similarly, let $\pi$ be the solution to \eqref{eq:priorBBB} with the same marginal distributions, and $\mu_t$ the corresponding displacement interpolation. Then, $\cP_{01}^\epsilon$ converges weakly to $\pi$ and $\cP_t^\epsilon$ converges weakly to $\mu_t$ as $\epsilon$ goes to $0$.
\end{thm}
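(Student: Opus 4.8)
The plan is to reduce Theorem~\ref{thm:priorslowingdown} to the already-established convergence for the classical OMT/Schr\"odinger pair, namely Theorem~\ref{thm:slowingdown}, by pushing everything through the linear coordinate transformation $C$ of \eqref{eq:coordtrans} that we used to convert OMT-wpd into standard OMT. The key observation is that $C$ is \emph{affine} and invertible (built from $M_{10}^{-1/2}$, $\Phi_{10}$), and affine maps conjugate the reference diffusion \eqref{eq:priorprocess} into something directly comparable to a scaled Brownian motion. So the first step is to make this precise at the level of reference processes: show that under a suitable time-dependent affine change of state variable $x \mapsto \hat x = R(t)x$ (with $R(1)=M_{10}^{-1/2}$, $R(0)=M_{10}^{-1/2}\Phi_{10}$, interpolating via the state transition matrix and the Gramian so as to kill the drift $A(t)x$ and normalize the diffusion), the process \eqref{eq:priorprocess} transforms into a (time-changed) Brownian motion of intensity $\epsilon$, whose Markov kernel is of the form \eqref{eq:browniankernel} up to a deterministic time reparametrization. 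Equivalently, and more cleanly, one can argue directly on kernels: the transition density of \eqref{eq:priorprocess} is Gaussian with mean $\Phi(t,s)x$ and covariance $\epsilon\,M(t,s)$, and this is exactly $q^{B,\epsilon}$ pre/post-composed with the affine maps determined by $\Phi$ and evaluated in the $\hat x$ coordinates.

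The second step is to transport the Schr\"odinger problem itself. By Theorem~\ref{thm:schrodinger} (Jamison), the Schr\"odinger bridge is characterized by the factorization \eqref{eq:distributionjoint}--\eqref{eq:distributionflow} in terms of the kernel $q^\epsilon$; since the map $C$ is a bijection carrying $q^\epsilon$ to (a time-reparametrized) $q^{B,\epsilon}$ and carrying $\mu_0,\mu_1$ to $\hat\mu_0,\hat\mu_1$ (with the explicit Jacobian factors $|M_{10}|^{1/2}$, $|\Phi_{10}|^{-1}$ already recorded after \eqref{eq:priorBBBB}), uniqueness in Theorem~\ref{thm:schrodinger} forces $\cP_{01}^\epsilon = C^{-1}_\sharp \hat\cP_{01}^{B,\epsilon}$ where $\hat\cP_{01}^{B,\epsilon}$ is the classical Schr\"odinger solution for $(\hat\mu_0,\hat\mu_1)$ with kernel $q^{B,\epsilon}$. (The deterministic time reparametrization of the prior does not affect the \emph{endpoint} coupling $\cP_{01}^\epsilon$ at all, and affects the interpolation $\cP_t^\epsilon$ only by relabeling the time axis, which one tracks explicitly.) In parallel, the OMT-wpd solution $\pi$ is $C^{-1}_\sharp\hat\pi$ with $\hat\pi$ the classical OMT solution for $(\hat\mu_0,\hat\mu_1)$ — this is precisely the content of \eqref{eq:priorBBBB}--\eqref{eq:monotoneforoneD}.

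The third step is then to invoke Theorem~\ref{thm:slowingdown} for the hatted marginals: since $C$ preserves absolute continuity and finiteness of second moments (it is affine), $\hat\mu_0,\hat\mu_1$ satisfy the hypotheses, so $\hat\cP_{01}^{B,\epsilon} \rightharpoonup \hat\pi$ and $\hat\cP_t^{B,\epsilon}\rightharpoonup\hat\mu_t$ as $\epsilon\to 0$. Finally, weak convergence is preserved under pushforward by the \emph{continuous} map $C^{-1}$: for any bounded continuous $f$, $f\circ C^{-1}$ is again bounded continuous, so $\int f\, d\cP_{01}^\epsilon = \int f\circ C^{-1}\, d\hat\cP_{01}^{B,\epsilon} \to \int f\circ C^{-1}\, d\hat\pi = \int f\, d\pi$, and similarly for the interpolations (here one also uses that the displacement-interpolation maps $T_t$ of \eqref{eq:priorinterpolation} are exactly the $C$-conjugates of the classical displacement interpolants $\hat T_t$, which follows from the formula $T_t(x)=\Phi(t,0)x + M(t,0)\Phi(0,t)'\nabla\psi(0,x)$ derived in the proof of the previous proposition).

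The main obstacle I expect is bookkeeping the time reparametrization cleanly: the conjugated prior is a Brownian motion run on a \emph{rescaled} clock $\tau = \tau(t)$ determined by $M(t,0)$ rather than on $[0,1]$ linearly, so one must check that (i) the endpoint Schr\"odinger coupling is genuinely invariant under this reparametrization (it is, since only the marginals at $t=0,1$ and the joint kernel $q(0,\cdot,1,\cdot)$ enter \eqref{eq:distributionjoint}, and $\tau(0)=0$, $\tau(1)=1$ may be arranged), and (ii) the interpolation $\cP_t^\epsilon$ converges to $\mu_t$ for the \emph{correct} parametrization — i.e. that the displacement interpolation $\mu_t$ defined by $T_t$ in \eqref{eq:priorinterpolation} matches the $C^{-1}$-image of the classical $\hat\mu_{\tau(t)}$ at the reparametrized time. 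An alternative that sidesteps the reparametrization entirely is to not insist on mapping to the \emph{standard} Brownian kernel but instead to apply a Jamison/Beurling-type uniqueness argument together with a version of Theorem~\ref{thm:slowingdown} stated for a general Gaussian prior kernel; but since the excerpt only gives us Theorem~\ref{thm:slowingdown} in the Brownian form, the conjugation-plus-time-change route is the honest one, and the remaining work is the (routine but careful) verification that $\Phi$ and $M$ assemble into the claimed affine conjugacy.
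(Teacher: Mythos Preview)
Your reduction for the \emph{endpoint} coupling $\cP_{01}^\epsilon\rightharpoonup\pi$ is exactly the paper's argument: both push forward through the affine map $C$ of \eqref{eq:coordtrans}, use the explicit Gaussian form of $q^\epsilon(0,x,1,y)$ to see that $C_\sharp\cP_{01}^\epsilon$ solves the Brownian Schr\"odinger problem with marginals $\hat\rho_0,\hat\rho_1$, invoke Theorem~\ref{thm:slowingdown}, and pull back along the continuous $C^{-1}$.

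The gap is in your treatment of the \emph{interpolation} $\cP_t^\epsilon\rightharpoonup\mu_t$. The program ``conjugate \eqref{eq:priorprocess} to a time-changed Brownian motion by a state map $x\mapsto R(t)x$'' does not go through in general. After killing the drift via $R(t)=R(0)\Phi(0,t)$, the resulting martingale has matrix-valued quadratic variation $\epsilon\int_0^t \Phi(0,s)B(s)B(s)'\Phi(0,s)'\,ds$; unless this is a scalar multiple of a fixed matrix for all $t$ (which fails for generic controllable pairs, and certainly when $B$ is not square), no scalar reparametrization $\tau(t)$ turns it into $\epsilon\,\tau(t)\,I$. Equivalently, the intermediate kernel $q^\epsilon(s,\cdot,t,\cdot)$ has covariance $\epsilon M(t,s)$, and there is no single $\tau$ with $M(t,s)$ proportional to $(\tau(t)-\tau(s))I$. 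Hence the hoped-for identity $\cP_t^\epsilon=(C_t^{-1})_\sharp\hat\cP_{\tau(t)}^{B,\epsilon}$ (and the companion claim that $T_t$ is the $C$-conjugate of $\hat T_{\tau(t)}$) does not hold in the multivariate, possibly degenerate-$B$ setting the theorem addresses. Your own ``main obstacle'' paragraph correctly locates the difficulty but underestimates it: it is not bookkeeping, it is a structural obstruction.

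The paper bypasses this entirely. For the interpolation it does \emph{not} try to conjugate to Brownian motion; instead it disintegrates over endpoints,
\[
\cP^\epsilon(\cdot)=\int_{\mR^n\times\mR^n}\cQ_{xy}^\epsilon(\cdot)\,d\cP_{01}^\epsilon(x,y),\qquad
\mu(\cdot)=\int_{\mR^n\times\mR^n}\delta_{\gamma^{xy}}(\cdot)\,d\pi(x,y),
\]
where $\cQ_{xy}^\epsilon$ is the pinned bridge of \eqref{eq:priorprocess} conditioned on $x(0)=x$, $x(1)=y$, and $\gamma^{xy}$ is the minimum-energy path \eqref{eq:priorgeodesic}. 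The pinned bridge has an explicit linear SDE whose drift is $\epsilon$-independent and whose diffusion is $\sqrt{\epsilon}B(t)$; letting $\epsilon\to 0$ gives $\cQ_{xy}^\epsilon\rightharpoonup\delta_{\gamma^{xy}}$ directly. Combined with the endpoint convergence $\cP_{01}^\epsilon\rightharpoonup\pi$ already established, this yields $\cP_t^\epsilon\rightharpoonup\mu_t$. This disintegration argument is the piece your proposal is missing.
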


An important consequence of this theorem is that we can now use the numerical algorithm in \cite{CheGeoPav15a} which provides a solution to the Schr\"odinger problem, for a vanishing $\epsilon$, as a means to solve the general problem of OMT with prior dynamics (and, in particular, the standard OMT \cite{CheGeoPav15a}). This is highlighted in the examples of Section \ref{sec:examples}. It should be noted that the algorithm, which relies on computing the pair
 $(\hat\varphi_0,\varphi_1)$ in Theorem \ref{thm:schrodinger}, is totally different from other numerical algorithms that solve standard OMT problems \cite{BenBre00}, \cite{AngHakTan03}.
 
\section{Gaussian marginals}\label{sec:gaussian}
We now consider the correspondence between Schr\"{o}dinger bridges and OMT-wpd for the special case where the marginals are normal distributions. That the OMT-wpd solution corresponds to the zero-noise limit of the Schr\"odinger bridges is of course a consequence of Theorem \ref{thm:priorslowingdown}, but in this case, we can obtain explicit expressions in closed-form and this is the point of this section.

Consider the reference evolution
	\begin{equation}\label{eq:dynamicslinear}
		d\xx(t)=A(t)\xx(t)dt+\sqrt{\epsilon}B(t)dw(t)
	\end{equation}
and the two marginals 
	\begin{subequations}\label{eq:marginalslinear}
	\begin{equation}
		\rho_0(x)=
        \frac{1}{\sqrt{(2\pi)^n|\Sigma_0|}}
        \exp\left[-\frac{1}{2}(x-m_0)'\Sigma_0^{-1}(x-m_0)\right],
	\end{equation} 		
	\begin{equation}
		\rho_1(x)=
        \frac{1}{\sqrt{(2\pi)^n|\Sigma_1|}}\exp\left[-\frac{1}{2}(x-m_1)'\Sigma_1^{-1}(x-m_1)\right],
	\end{equation}
	\end{subequations}
where, as usual, the system with matrices $(A(t), B(t))$ is controllable. 
In our previous work \cite{CheGeoPav14a,CheGeoPav14b}, we derived a ``closed-form'' expression for the Schr\"{o}dinger bridge when $m_0=m_1=0$, namely,
	\begin{equation}\label{eq:schrodingerbridgelinear1}
		d\xx(t)=(A(t)-B(t)B(t)'\Pi_\epsilon(t))\xx(t) dt+\sqrt{\epsilon}B(t)dw(t)
	\end{equation}
with $\Pi_\epsilon(t)$ satisfying the matrix Riccati equation
	\begin{equation}\label{eq:schrodingerbridgefeedback}
		\dot{\Pi}_\epsilon(t)+A(t)'\Pi_\epsilon(t)+\Pi_\epsilon(t)A(t)
        -\Pi_\epsilon(t)B(t)B(t)'\Pi_\epsilon(t)=0
	\end{equation}
and the boundary condition
	\begin{equation}\label{eq:boundarycondition}
        \Pi_\epsilon(0)=\Sigma_0^{-1/2}[\frac{\epsilon}{2}I+
	    \Sigma_0^{1/2}\Phi_{10}^\prime M_{10}^{-1}\Phi_{10}\Sigma_0^{1/2}-
	   (\frac{\epsilon^2}{4}I+
        \Sigma_0^{1/2}\Phi_{10}^\prime M_{10}^{-1}\Sigma_1M_{10}^{-1}
	    \Phi_{10}\Sigma_0^{1/2})^{1/2}]\Sigma_0^{-1/2}.
    \end{equation}
When $m_0\neq 0$ or  $m_1\neq 0$ the bridge becomes:
	\begin{equation}\label{eq:schrodingerbridgelinear2}
		d\xx(t)=(A(t)-B(t)B(t)'\Pi_\epsilon(t))\xx(t) dt
        +B(t)B(t)'m(t)dt+\sqrt{\epsilon}B(t)dw(t)
	\end{equation}
where
	\begin{equation}\label{eq:schrodingerbridgedrift}
        m(t)=
        \hat{\Phi}(\tend,t)'\hat{M}(\tend,\tstart)^{-1}(m_1-\hat{\Phi}(\tend,\tstart)m_0)
	\end{equation}
with $\hat{\Phi}(t,s), \hat{M}(t,s)$ satisfying
	\begin{equation*}
		\frac{\partial \hat{\Phi}(t,s)}{\partial t} =
        (A(t)-B(t)B(t)'\Pi_\epsilon(t))\hat{\Phi}(t,s),~~~\hat{\Phi}(t,t)=I
	\end{equation*}
and
	\[
		\hat{M}(t,s)=\int_s^t\hat{\Phi}(t,\tau)B(t)B(t)'\hat{\Phi}(t,\tau)'d\tau.
	\]

Next we consider the zero-noise limit
by letting $\epsilon$ go to $0$. In the case where $A(t)\equiv 0, B(t)\equiv I$, the Schr\"{o}dinger bridges converge to the solution of the OMT. In general, when $A(t)\not\equiv 0, B(t)\not\equiv I$, by taking $\epsilon= 0$ in \eqref{eq:boundarycondition} we obtain
	\begin{equation}\label{eq:schrodingerbridgeinitial}
		\Pi_0(0)=
        \Sigma_0^{-1/2}[\Sigma_0^{1/2}\Phi_{10}^\prime M_{10}							^{-1}\Phi_{10}\Sigma_0^{1/2}-(\Sigma_0^{1/2}\Phi_{10}^\prime M_{10}		^{-1}\Sigma_1M_{10}^{-1}\Phi_{10}\Sigma_0^{1/2})^{1/2}]\Sigma_0^{-1/2},
	\end{equation}
and the corresponding limiting process
	\begin{equation}\label{eq:optimaltransportprior}
		dx(t)=(A(t)-B(t)B(t)'\Pi_0(t))x(t) dt+B(t)B(t)'m(t)dt,
        ~~x(\tstart)\sim (m_0,\Sigma_0)
	\end{equation}
with $\Pi_0(t), m(t)$ satisfying \eqref{eq:schrodingerbridgefeedback}, \eqref{eq:schrodingerbridgedrift} and \eqref{eq:schrodingerbridgeinitial}. In fact $\Pi_0(t)$ has the explicit expression
    \begin{eqnarray}
        \nonumber
        \Pi_0(t)&=&-M(t,\tstart)^{-1}-M(t,\tstart)^{-1}\Phi(t,\tstart)\left[\Phi_{10}^\prime M_{10}^{-1}\Phi_{10}
        -\Sigma_0^{-1/2}(\Sigma_0^{1/2}\Phi_{10}^\prime M_{10}^{-1}\Sigma_1M_{10}^{-1}\Phi_{10}\Sigma_0^{1/2})^{1/2}\Sigma_0^{-1/2}\right.
        \\
        &&
        \left.-
        \Phi(t,\tstart)'M(t,\tstart)^{-1}\Phi(t,\tstart)\right]^{-1}\Phi(t,\tstart)'M(t,\tstart)^{-1}.
        \label{eq:feedbackPi}
    \end{eqnarray}

As indicated earlier, Theorem \ref{thm:priorslowingdown} already implies that
 \eqref{eq:optimaltransportprior} yields an optimal solution to \eqref{eq:priorBB}. Here we give an alternative proof by directly verifying that the corresponding displacement interpolation and the control satisfy the conditions of Proposition \ref{prop:sufficientconditions}.

\begin{proposition}
Let $\nrho(t,\cdot)$ be the probability density of the process $x(t)$ in \eqref{eq:optimaltransportprior}, and
	\[
	    u(t,x)=-B(t)'\Pi_0(t)x+B(t)'m(t),
	\]
then the pair $(\nrho,u)$ is a solution of the problem \eqref{eq:priorBB} with prior dynamics \eqref{eq:priorsys} and marginals \eqref{eq:marginalslinear}.
\end{proposition}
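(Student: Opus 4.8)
The plan is to verify the sufficient conditions of Proposition~\ref{prop:sufficientconditions} directly for the Gaussian data. Since that proposition is phrased in terms of a potential $\psi$ with $u^*(t,x)=B(t)'\nabla\psi(t,x)$, the first step is to exhibit the right candidate $\psi$. Given the stated control $u(t,x)=-B(t)'\Pi_0(t)x+B(t)'m(t)$, the natural choice is the quadratic
\[
\psi(t,x)=-\tfrac12 x'\Pi_0(t)x+m(t)'x+\kappa(t)
\]
for a suitable scalar function $\kappa(t)$, so that $\nabla\psi(t,x)=-\Pi_0(t)x+m(t)$ and hence $B(t)'\nabla\psi=u(t,x)$ as required. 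The first task is then to check that this $\psi$ solves the Hamilton--Jacobi equation~\eqref{eq:hamiltonjacobi}. Substituting the quadratic ansatz, the quadratic-in-$x$ terms collapse exactly to the Riccati equation~\eqref{eq:schrodingerbridgefeedback} for $\Pi_0$ (taking $\epsilon=0$ in \eqref{eq:schrodingerbridgefeedback} is vacuous since that equation has no $\epsilon$), the linear-in-$x$ terms collapse to the ODE $\dot m(t)=-(A(t)-B(t)B(t)'\Pi_0(t))'m(t)$ which is exactly the adjoint equation satisfied by \eqref{eq:schrodingerbridgedrift}, and the constant term defines $\kappa(t)$ by quadrature. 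One must also confirm that $\Pi_0(t)$ given by the explicit formula~\eqref{eq:feedbackPi} indeed solves~\eqref{eq:schrodingerbridgefeedback} with initial condition~\eqref{eq:schrodingerbridgeinitial}; this is a direct differentiation of \eqref{eq:feedbackPi}, using $\tfrac{d}{dt}M(t,0)^{-1}=-M(t,0)^{-1}\dot M(t,0)M(t,0)^{-1}$ and $\dot M(t,0)=A(t)M(t,0)+M(t,0)A(t)'+B(t)B(t)'$, together with $\tfrac{d}{dt}\Phi(t,0)=A(t)\Phi(t,0)$.

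The second step is to verify that $\nrho(t,\cdot)$, the law of the process~\eqref{eq:optimaltransportprior}, satisfies the Liouville (continuity) equation~\eqref{eq:prioroptev} with velocity field $v(t,x)=A(t)x+B(t)B(t)'\nabla\psi(t,x)=(A(t)-B(t)B(t)'\Pi_0(t))x+B(t)B(t)'m(t)$. But this is immediate: \eqref{eq:optimaltransportprior} is a deterministic linear ODE whose vector field is precisely this $v$, so the one-time marginal $\nrho(t,\cdot)$ satisfies the associated transport equation by definition, and the initial condition $\nrho(0,\cdot)=\rho_0$ holds by construction. Concretely, $x(t)$ stays Gaussian, say $x(t)\sim(m(t)_x,\Sigma(t))$, with the mean and covariance propagating by the standard linear ODEs, so $\nrho$ is explicit.

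The third and genuinely substantive step is the terminal condition $\nrho(1,\cdot)=\rho_1$, i.e.\ that the Gaussian propagated under~\eqref{eq:optimaltransportprior} from $(m_0,\Sigma_0)$ actually lands on $(m_1,\Sigma_1)$ at $t=1$. For the covariance this amounts to showing that the boundary value $\Pi_0(0)$ in~\eqref{eq:schrodingerbridgeinitial} was precisely engineered so that the closed-loop controllability Gramian transforms $\Sigma_0$ into $\Sigma_1$; algebraically one checks that with $\Pi_0(0)$ as given, $\Sigma(1)=\Phi_{10}\Sigma_0\Phi_{10}'+\text{(Gramian-type correction)}$ simplifies to $\Sigma_1$, the key identity being that $\Sigma_0^{1/2}\Phi_{10}'M_{10}^{-1}\Sigma_1M_{10}^{-1}\Phi_{10}\Sigma_0^{1/2}$ appears under a square root in~\eqref{eq:schrodingerbridgeinitial} exactly so that this cancellation occurs --- this is the same computation that validated the Schr\"odinger bridge formula in~\cite{CheGeoPav14a} specialized to $\epsilon=0$. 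For the mean, one uses the definition of $m(t)$ in~\eqref{eq:schrodingerbridgedrift} together with the variation-of-constants formula for~\eqref{eq:optimaltransportprior}: the term $B(t)B(t)'m(t)$ integrated against the closed-loop transition matrix $\hat\Phi$ reconstructs exactly $m_1-\hat\Phi(1,0)m_0$, so $m(1)_x=m_1$. I expect this matching of the covariance at $t=1$ --- verifying that the particular algebraic form of~\eqref{eq:schrodingerbridgeinitial} produces $\Sigma_1$ --- to be the main obstacle, since it is the one place where the specific closed-form expressions must be manipulated rather than merely differentiated; once it is in hand, all three conditions of Proposition~\ref{prop:sufficientconditions} hold and the pair $(\nrho,u)$ is optimal for~\eqref{eq:priorBB}.
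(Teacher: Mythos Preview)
Your proposal is correct and follows essentially the same strategy as the paper: verify Proposition~\ref{prop:sufficientconditions} by exhibiting the quadratic potential $\psi(t,x)=-\tfrac12 x'\Pi_0(t)x+m(t)'x+\kappa(t)$, checking the Hamilton--Jacobi equation term by term (Riccati for the quadratic part, the adjoint equation for $m$ in the linear part, quadrature for $\kappa$), and then verifying the terminal marginal $(m_1,\Sigma_1)$. The only noteworthy difference is in how the covariance endpoint $\Sigma(1)=\Sigma_1$ is handled: the paper writes down an explicit closed form for $\Sigma(t)$ (equation~\eqref{eq:statecovariance}), checks that it satisfies the closed-loop Lyapunov equation, and then evaluates at $t=1$ so that the square-root structure in~\eqref{eq:schrodingerbridgeinitial} collapses directly to $\Sigma_1$; you instead gesture at the $\epsilon\to0$ specialization of the Schr\"odinger-bridge computation in~\cite{CheGeoPav14a}. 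Both are valid, but the paper's explicit $\Sigma(t)$ makes the terminal verification a one-line substitution rather than an appeal to an external reference.
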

\begin{proof}
To prove that the pair $(\nrho,u)$ is a solution, we show first that $\nrho$ satisfies the boundary condition $\nrho(\tend,x)=\rho_1(x)$, and second, that $u(t,x)=B(t)'\nabla \psi(t,x)$ for some $\psi$ that satisfies the Hamilton-Jacobi equation~\eqref{eq:hamiltonjacobi}.

Equation \eqref{eq:optimaltransportprior} is linear with gaussian initial condition, hence $x(t)$ is a gaussian process. We claim that density of $x(t)$ is
    \[
        \nrho(t,x)=\frac{1}{\sqrt{(2\pi)^n|\Sigma(t)|}}\exp\left[-\frac{1}{2}(x-n(t))'\Sigma(t)^{-1}(x-n(t))\right],
    \]
where
    \[
        n(t)=\hat{\Phi}(t,\tstart)m_0+\int_{\tstart}^{t}\hat{\Phi}(t,\tau)B(\tau)B(\tau)'m(\tau)d\tau
    \]
and
    \begin{eqnarray}
        \nonumber
        \Sigma(t)&=&M(t,\tstart)\Phi(\tstart,t)'\Sigma_0^{-1/2}\left[-\Sigma_0^{1/2}\Phi_{10}^\prime M_{10}^{-1}\Phi_{10}\Sigma_0^{1/2}+(\Sigma_0^{1/2}\Phi_{10}^\prime M_{10}^{-1}\Sigma_1M_{10}^{-1}\Phi_{10}\Sigma_0^{1/2})^{1/2}\right.\\
        &&\left.+\Sigma_0^{1/2}\Phi(t,\tstart)'
        M(t,\tstart)^{-1}\Phi(t,\tstart)\Sigma_0^{1/2}\right]^2 \Sigma_0^{-1/2}\Phi(\tstart,t)M(t,\tstart)
        \label{eq:statecovariance}
    \end{eqnarray}
for $t\in(\tstart,\tend]$. It is obvious that $\E\{x(t)\}=n(t)$ and it is also immediate that
    \[
        \lim_{t\to \tstart}\Sigma(t)=\Sigma_0.
    \]
  Straightforward but lengthy computations show that
  $\Sigma(t)$ satisfies the Lyapunov differential equation
    \[
        \dot{\Sigma}(t)=(A(t)-B(t)B(t)'\Pi_0(t))\Sigma(t)+\Sigma(t)(A(t)-B(t)B(t)'\Pi_0(t))'.
    \]
    Hence, $\Sigma(t)$ is the covariance of $x(t)$.
Now, observing that
    \begin{eqnarray*}
        n(\tend)&=&\hat{\Phi}(\tend,\tstart)m_0+\int_{\tstart}^{\tend}\hat{\Phi}(\tend,\tau)B(\tau)B(\tau)'m(\tau)d\tau\\
        &=&\hat{\Phi}(\tend,\tstart)m_0+\int_{\tstart}^{\tend}\hat{\Phi}(\tend,\tau)B(\tau)B(\tau)'\hat{\Phi}(\tend,\tau)'
        d\tau\hat{M}(\tend,\tstart)^{-1}(m_1-\hat{\Phi}(\tend,\tstart)m_0)=m_1
    \end{eqnarray*}
and
    \begin{eqnarray*}
        \Sigma(\tend)&=&M(\tend,\tstart)\Phi(\tstart,\tend)'\Sigma_0^{-1/2}\left[(\Sigma_0^{1/2}\Phi_{10}^\prime M_{10}^{-1}\Sigma_1M_{10}^{-1}\Phi_{10}\Sigma_0^{1/2})^{1/2}\right]^2\Sigma_0^{-1/2}\Phi(\tstart,\tend)M(\tend,\tstart)=\Sigma_1,
    \end{eqnarray*}
allows us to conclude that $\nrho$ satisfies $\nrho(\tend,x)=\rho_1(x)$.

For the second claim, let
    \[
        \psi(t,x)=-\frac{1}{2}x'\Pi_0(t)x+m(t)'x+c(t)
    \]
with
    \[
        c(t)=-\frac{1}{2}\int_{\tstart}^{t}m(\tau)'B(\tau)B(\tau)'m(\tau)d\tau.
    \]
Clearly, $u(t,x)=B(t)'\nabla\psi$ while
    \begin{eqnarray*}
        &&\frac{\partial \psi}{\partial t}+A(t)x\cdot\nabla\psi+\frac{1}{2}\nabla\psi\cdot B(t)B(t)'\nabla\psi\\
        &=& -\frac{1}{2}x'\dot{\Pi}_0(t)x+\dot{m}(t)'x+\dot{c}(t)+x'A(t)'(-\Pi_0(t)x+m(t))\\
        &&+\frac{1}{2}(-x'\Pi_0(t)+m(t)') B(t)B(t)'(-\Pi_0(t)x+m(t))\\
        &=&\frac{1}{2}x'(A(t)'\Pi_0+\Pi_0 A(t)-\Pi_0(t)B(t)B(t)'\Pi_0(t))x-m(t)'(A(t)-B(t)B(t)'\Pi_0(t))x+\dot{c}(t)\\
        &&+x'A(t)'(-\Pi_0(t)x+m(t))+\frac{1}{2}(-x'\Pi_0(t)+m(t)') B(t)B(t)'(-\Pi_0(t)x+m(t))\\
        &=& \dot{c}(t)+\frac{1}{2}m(t)'B(t)B(t)'m(t)=0.
    \end{eqnarray*}
\end{proof}

\section{Numerical examples}\label{sec:examples}
We present two examples. The first one is on steering a collection of {\em inertial} particles in a $2$-dimensional phase space between Gaussian marginal distributions at the two end-points of a time interval. We use the closed-form control presented in Section \ref{sec:gaussian}. The second example is on steering distributions in a one-dimensional state-space with specified prior dynamics and more general marginal distributions. In both examples, we observe that the entropic interpolations converge to the displacement interpolation as the diffusion coefficient goes to zero.

\subsection{Gaussian marginals}
Consider a large collection of inertial particles moving in a $1$-dimension configuration space (i.e., for each particle, the position $x(t)\in{\mathbb R}$). The position $x$ and velocity $v$ of particles are assumed to be jointly normally distributed in the $2$-dimensional phase space ($(x,v)\in {\mathbb R}^2$) with mean and variance
	\[
		m_0=\left[\begin{matrix}-5 \\-5\end{matrix}\right],\mbox{ and }
        \Sigma_0=\left[\begin{matrix}1 & 0\\ 0 & 1\end{matrix}\right]
	\]
at $t=0$. We seek to steer the particles to a new joint normal distribution with mean and variance
	\[
		m_1=\left[\begin{matrix}5 \\5\end{matrix}\right],\mbox{ and }
		\Sigma_1=\left[\begin{matrix}1 & 0\\ 0 & 1\end{matrix}\right]
	\]
at $t=1$. The problem to steer the particles provides also a natural way to interpolate these two end-point marginals by providing a flow of one-time marginals at intermediary points $t\in[0,1]$.

When the particles experience stochastic forcing, their trajectories correspond to a Schr\"{o}dinger bridge with reference evolution
	\[
        d\left(\begin{array}{c}x(t)\\v(t)\end{array}\right)=
        \left[\begin{array}{cc}0 & 1\\ 0 & 0 \end{array}\right]
        \left(\begin{array}{c}x(t)\\v(t) \end{array}\right) dt+\left[\begin{array}{c}0\\1 \end{array}\right]\sqrt{\epsilon}dw(t).
	\]
In particular, we are interested in the behavior of trajectories when the random forcing is negligible compared to the ``deterministic'' drift.

Figure \ref{fig:interpolation1} depicts the flow of the one-time marginals of the Schr\"{o}dinger bridge with $\epsilon=9$. The transparent tube represents the $3\sigma$ region
	\[
		(\xi(t)'-m_t')\Sigma_t ^{-1}(\xi(t)-m_t) \le 9,~~~\xi(t)=
        \left[\begin{array}{c}x(t)\\v(t) \end{array}\right]
	\]
and the curves with different color stand for typical sample paths of the Schr\"{o}dinger bridge. Similarly, Figures \ref{fig:interpolation2} and \ref{fig:interpolation3} depict the corresponding flows for $\epsilon=4$ and $\epsilon=0.01$, respectively. The interpolating flow in the absence of stochastic disturbance, i.e., for the optimal transport with prior, is depicted in Figure \ref{fig:interpolation4}; the sample paths are now smooth as compared to the corresponding sample paths with stochastic disturbance.
As $\epsilon\searrow 0$, the paths converge to those corresponding to optimal transport and $\epsilon=0$. For comparison, we also provide in Figure \ref{fig:interpolation5} the interpolation corresponding to optimal transport without prior, i.e., for the trivial dynamics $A(t)\equiv 0$ and $B(t)\equiv I$, which is precisely a constant speed translation.
\begin{figure}\begin{center}
    \includegraphics[width=0.50\textwidth]{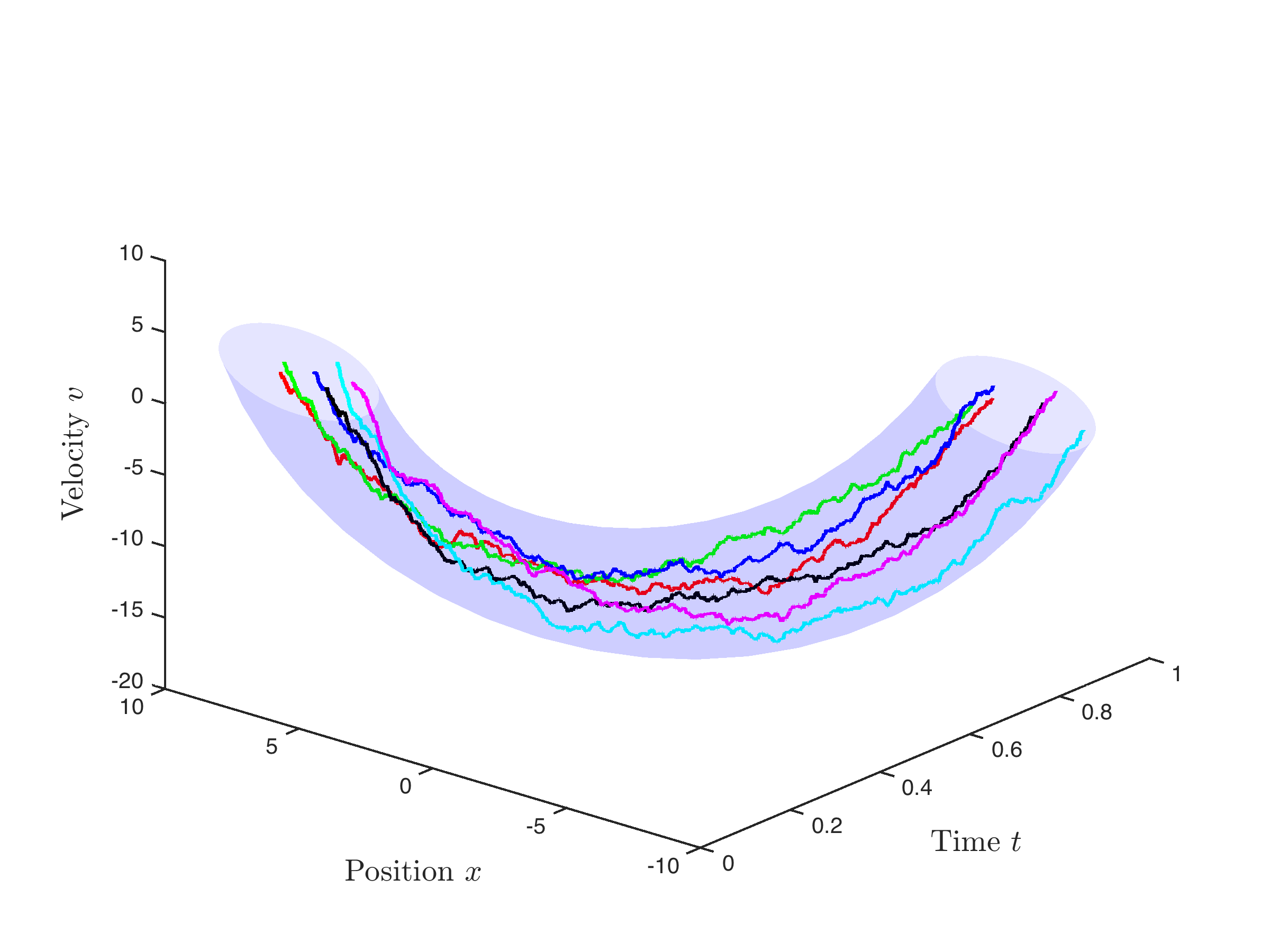}
    \caption{Interpolation based on Schr\"{o}dinger bridge with $\epsilon=9$}
    \label{fig:interpolation1}
\end{center}\end{figure}
\begin{figure}\begin{center}
    \includegraphics[width=0.50\textwidth]{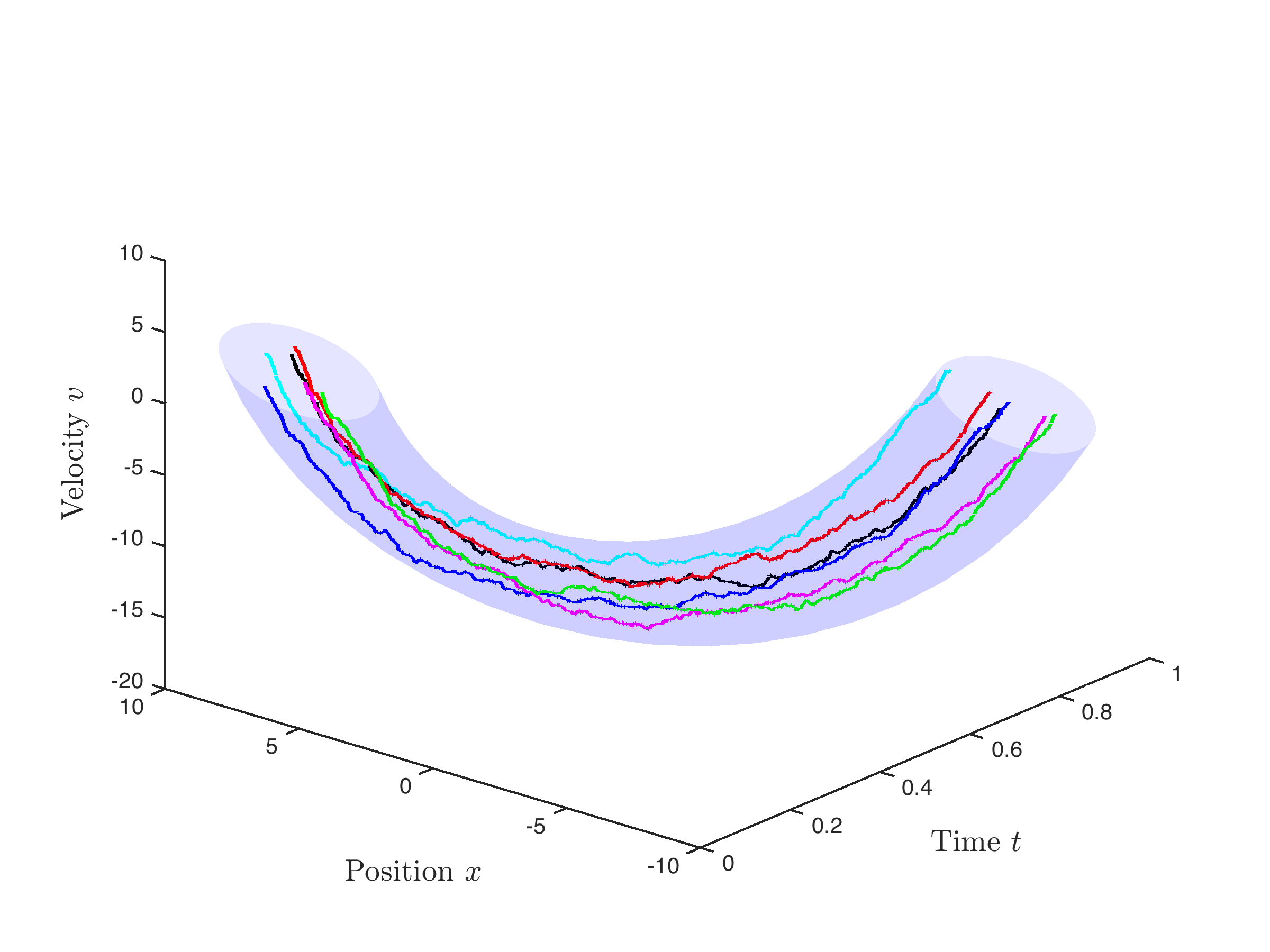}
    \caption{Interpolation based on Schr\"{o}dinger bridge with $\epsilon=4$}
    \label{fig:interpolation2}
\end{center}\end{figure}
\begin{figure}\begin{center}
    \includegraphics[width=0.50\textwidth]{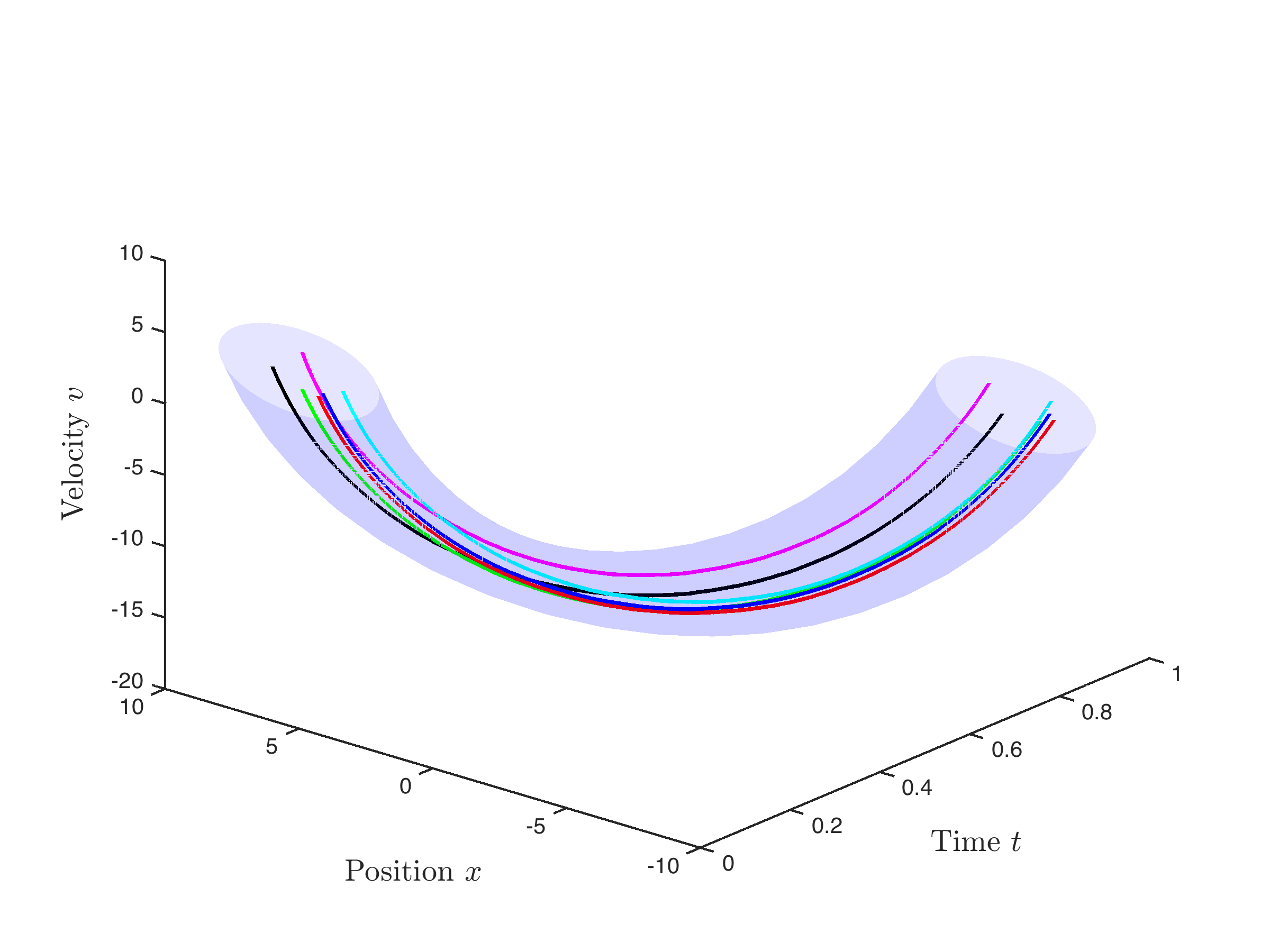}
    \caption{Interpolation based on Schr\"{o}dinger bridge with $\epsilon=0.01$}
    \label{fig:interpolation3}
\end{center}\end{figure}
\begin{figure}\begin{center}
    \includegraphics[width=0.50\textwidth]{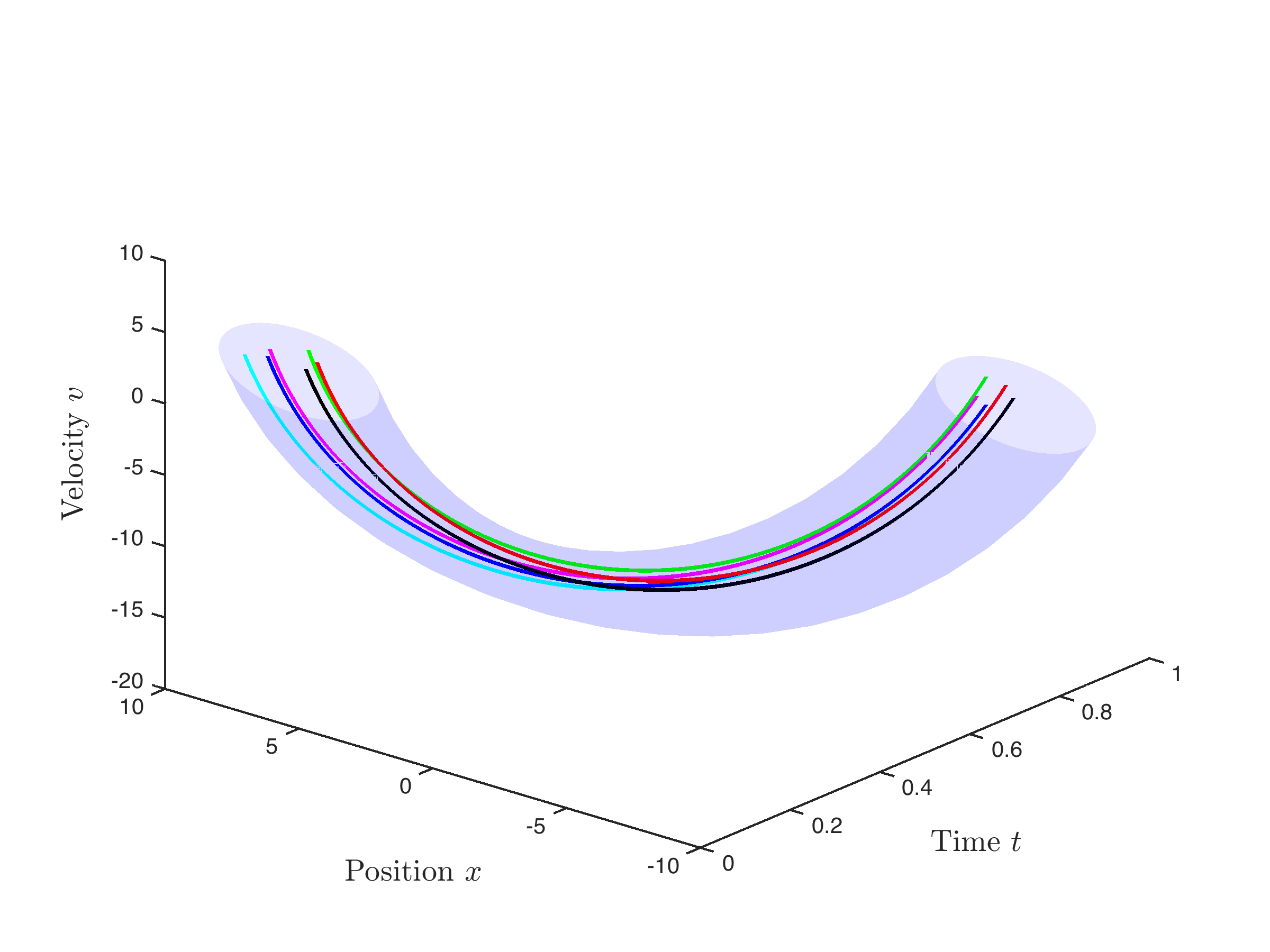}
    \caption{Interpolation based on OMT-wpd}
    \label{fig:interpolation4}
\end{center}\end{figure}
\begin{figure}\begin{center}
    \includegraphics[width=0.50\textwidth]{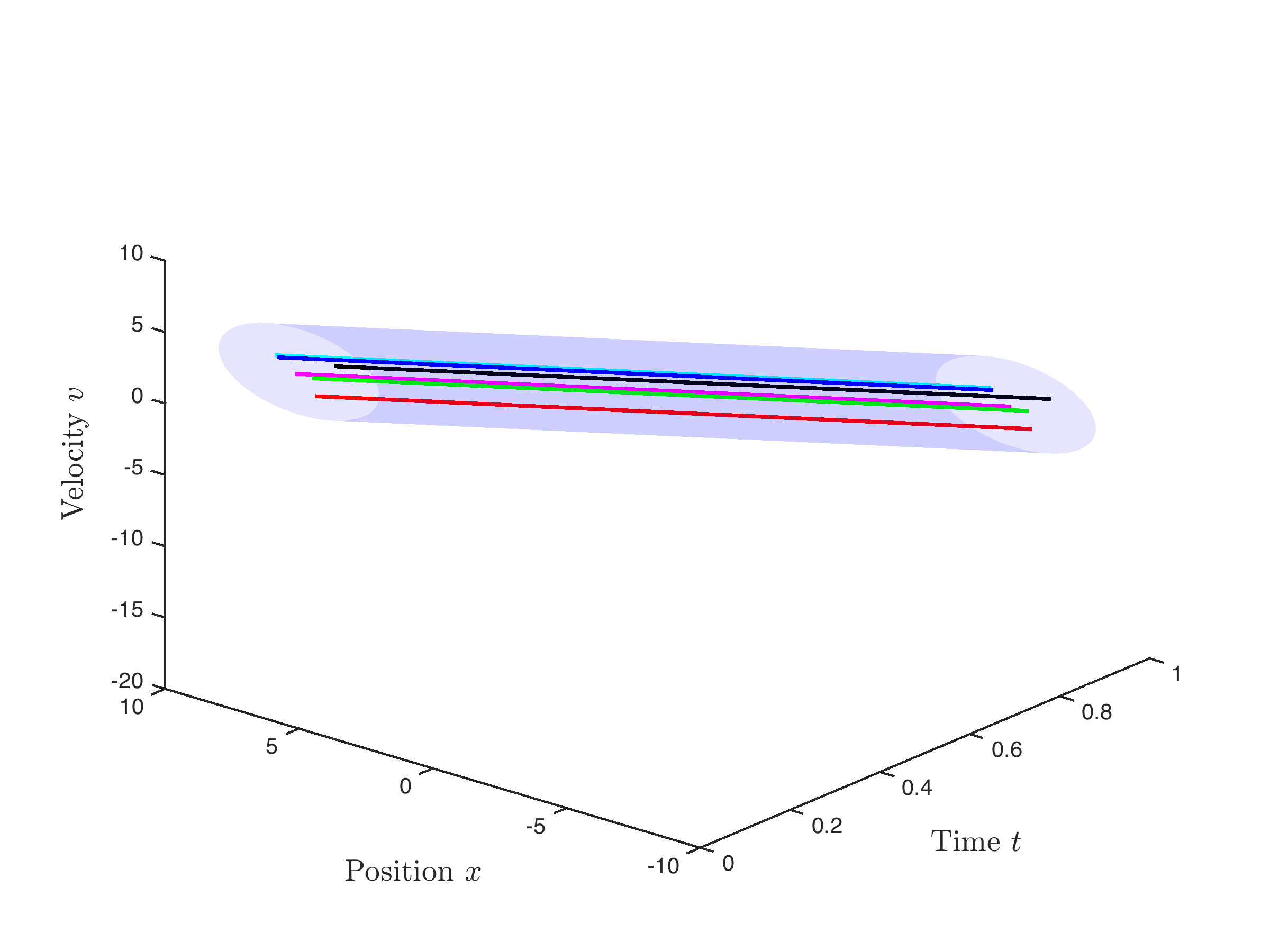}
    \caption{Interpolation based on OMT}
    \label{fig:interpolation5}
\end{center}\end{figure}

\subsection{General marginals}
Consider now a large collection of particles obeying
	\[
		dx(t)=-2x(t)dt+u(t)dt
	\]
in $1$-dimensional state space with marginal distributions
	\[
		\rho_0(x) =
        \begin{cases}
        {0.2-0.2\cos(3\pi x)+0.2} & \text{if}~ 0\le x<2/3\\
        {5-5\cos(6\pi x-4\pi)+0.2} & \text{if}~ 2/3\le x\le 1,
        \end{cases}
	\]
and
    \[
        \rho_1(x)=\rho_0(1-x).
    \]
These are shown in Figure \ref{fig:marginals} and, obviously, are not Gaussian.
Once again, our goal is to steer the state of the system (equivalently, the particles) from the initial distribution $\rho_0$ to the final $\rho_1$ using minimum energy control. That is, we need to solve the problem of OMT-wpd.
In this $1$-dimensional case, just like in the classical OMT problem, the optimal transport map $y=T(x)$ between the two end-points can be determined from\footnote{
In this 1-dimensional case, \eqref{eq:monotoneforoneD} is a simple rescaling and, therefore, $T(\cdot)$ inherits the monotonicity of $\hat T(\cdot)$.}
    \[
        \int_{-\infty}^x \rho_0(y)dy=\int_{-\infty}^{T(x)}\rho_1(y)dy.
    \]
The interpolation flow $\rho_t,~0\le t\le 1$ can then be obtained using \eqref{eq:priorinterpolation}.  Figure \ref{fig:omtwithprior} depicts the solution of OMT-wpd. For comparison, we also show the solution of the classical OMT in figure \ref{fig:omt} where the particles move on straight lines.

Finally, we assume a stochastic disturbance,
	\[
		dx(t)=-2x(t)dt+u(t)dt+\sqrt\epsilon dw(t),
	\]
with $\epsilon>0$.
Figure \ref{fig:sbprior1}--\ref{fig:sbprior5} depict minimum energy flows for diffusion coefficients $\sqrt\epsilon=0.5,~0.3,~0.15,~0.05,~0.01$, respectively. As $\epsilon\to 0$, it is seen that the solution to the Schr\"odinger problem converges to the solution of the problem of OMT-wpd as expected.
\begin{figure}\begin{center}
    \includegraphics[width=0.50\textwidth]{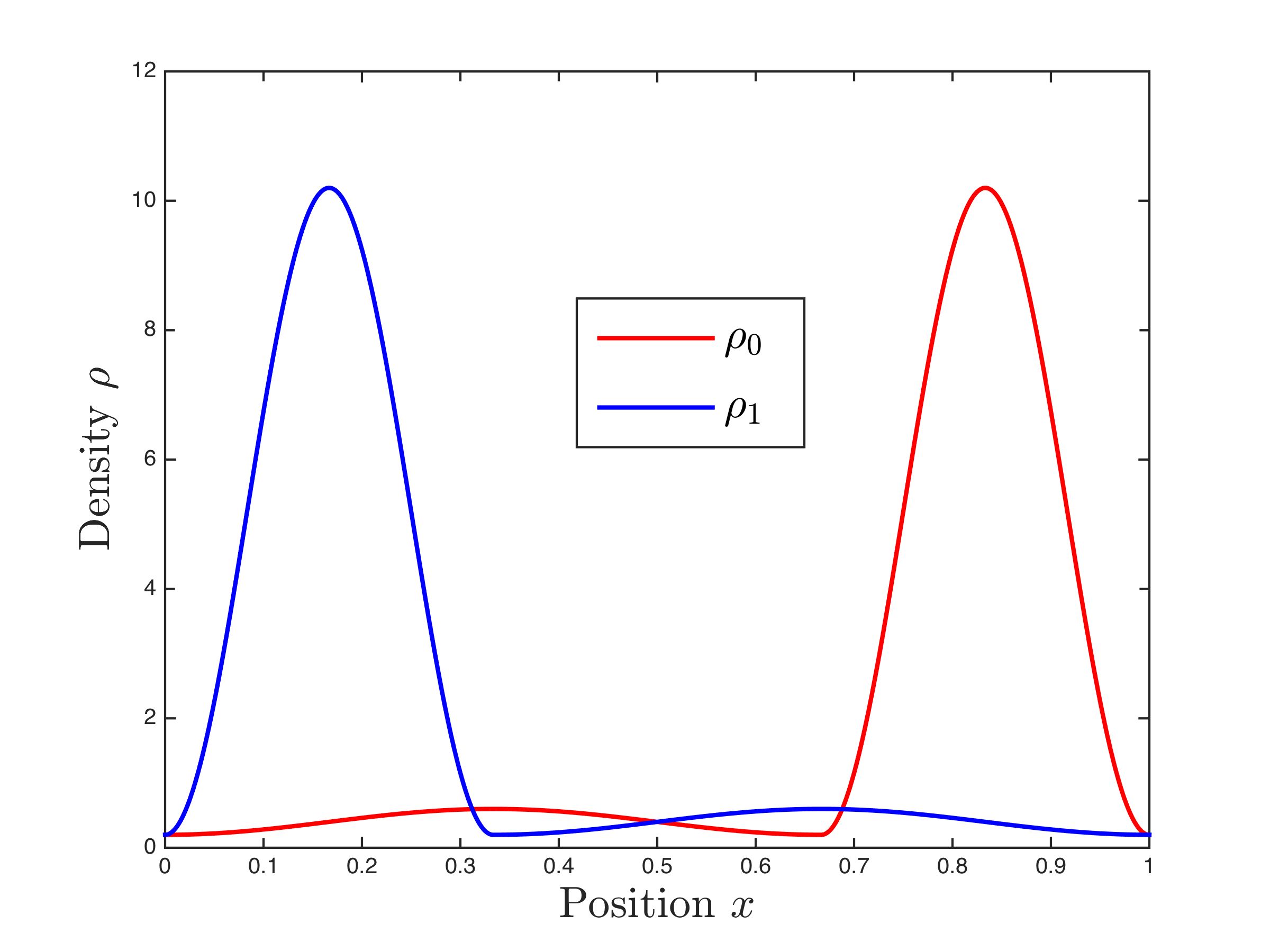}
    \caption{Marginal distributions}
    \label{fig:marginals}
\end{center}\end{figure}
\begin{figure}\begin{center}
    \includegraphics[width=0.50\textwidth]{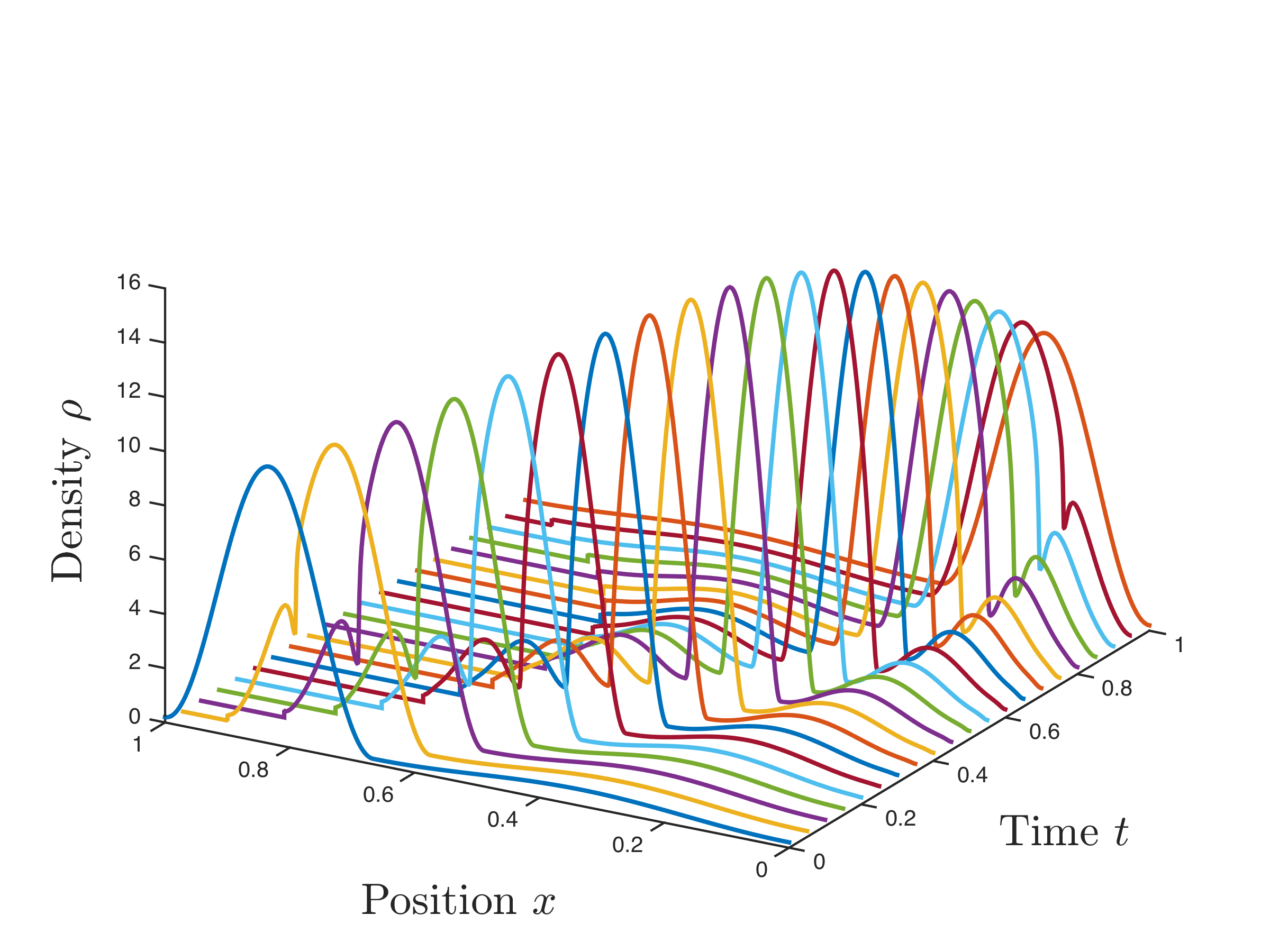}
    \caption{Interpolation based on OMT-wpd}
    \label{fig:omtwithprior}
\end{center}\end{figure}
\begin{figure}\begin{center}
    \includegraphics[width=0.50\textwidth]{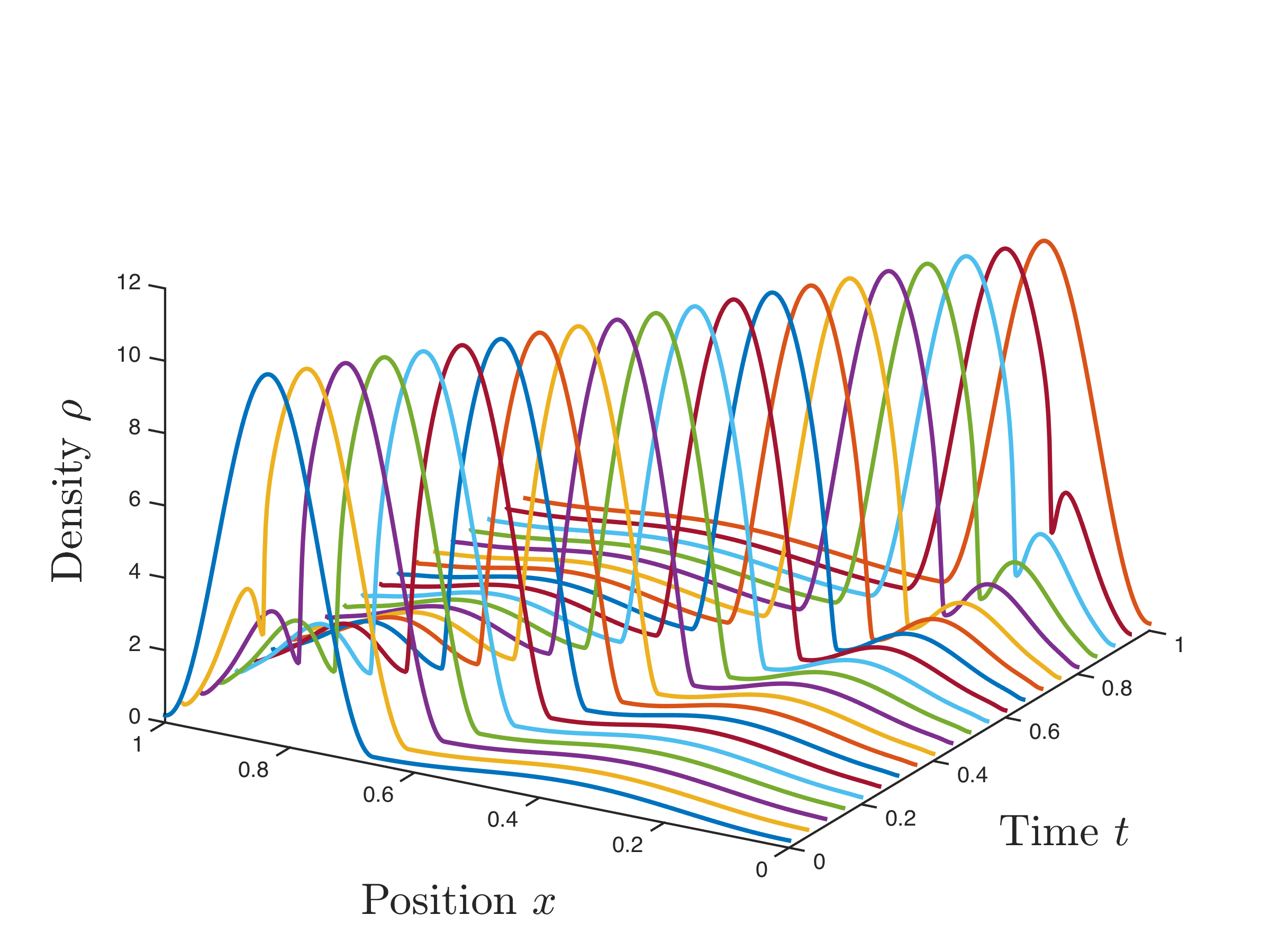}
    \caption{Interpolation based on OMT}
    \label{fig:omt}
\end{center}\end{figure}
\begin{figure}\begin{center}
    \includegraphics[width=0.50\textwidth]{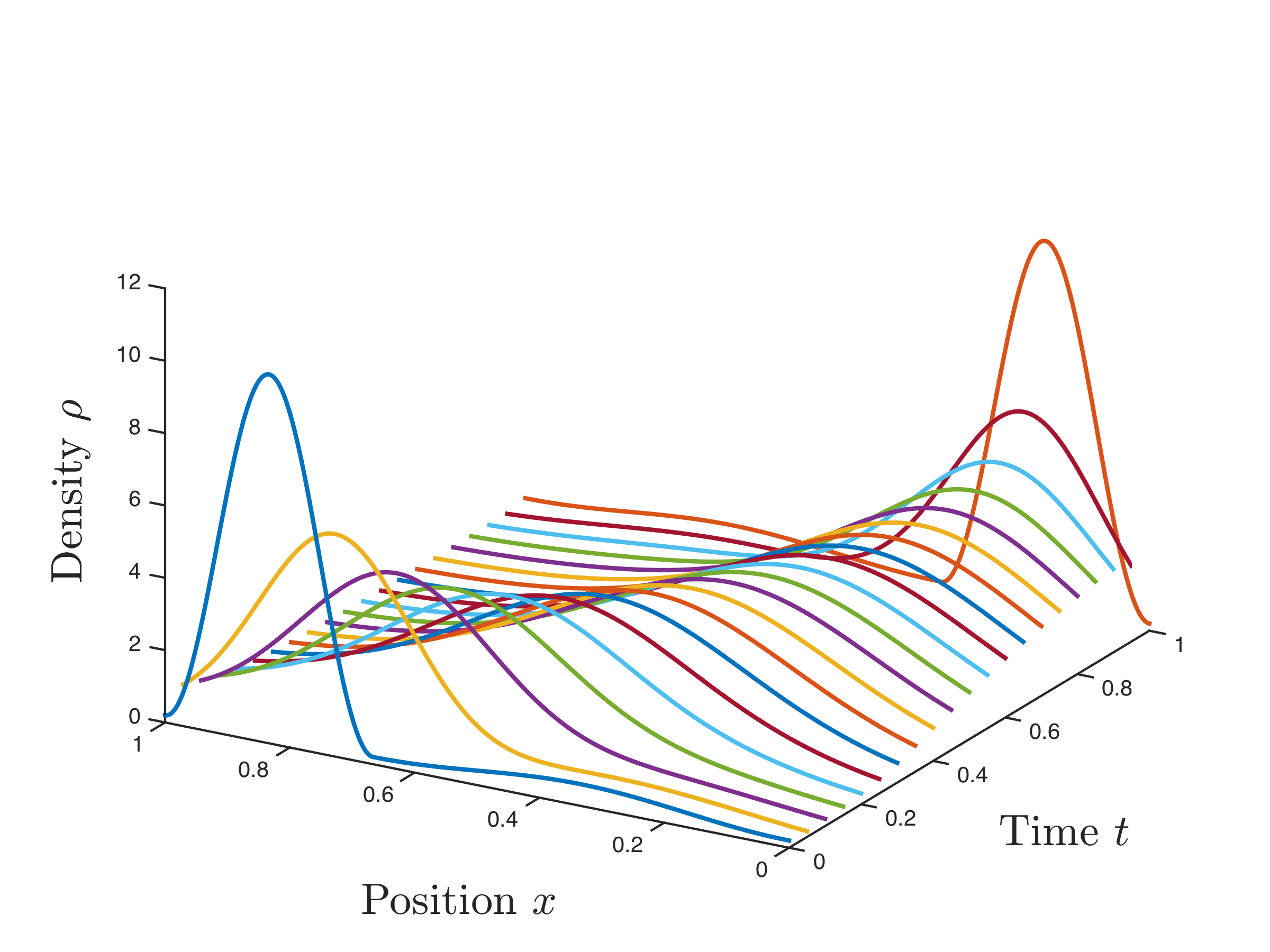}
    \caption{Interpolation based on Schr\"odinger bridge with $\sqrt\epsilon=0.5$}
    \label{fig:sbprior1}
\end{center}\end{figure}
\begin{figure}\begin{center}
    \includegraphics[width=0.50\textwidth]{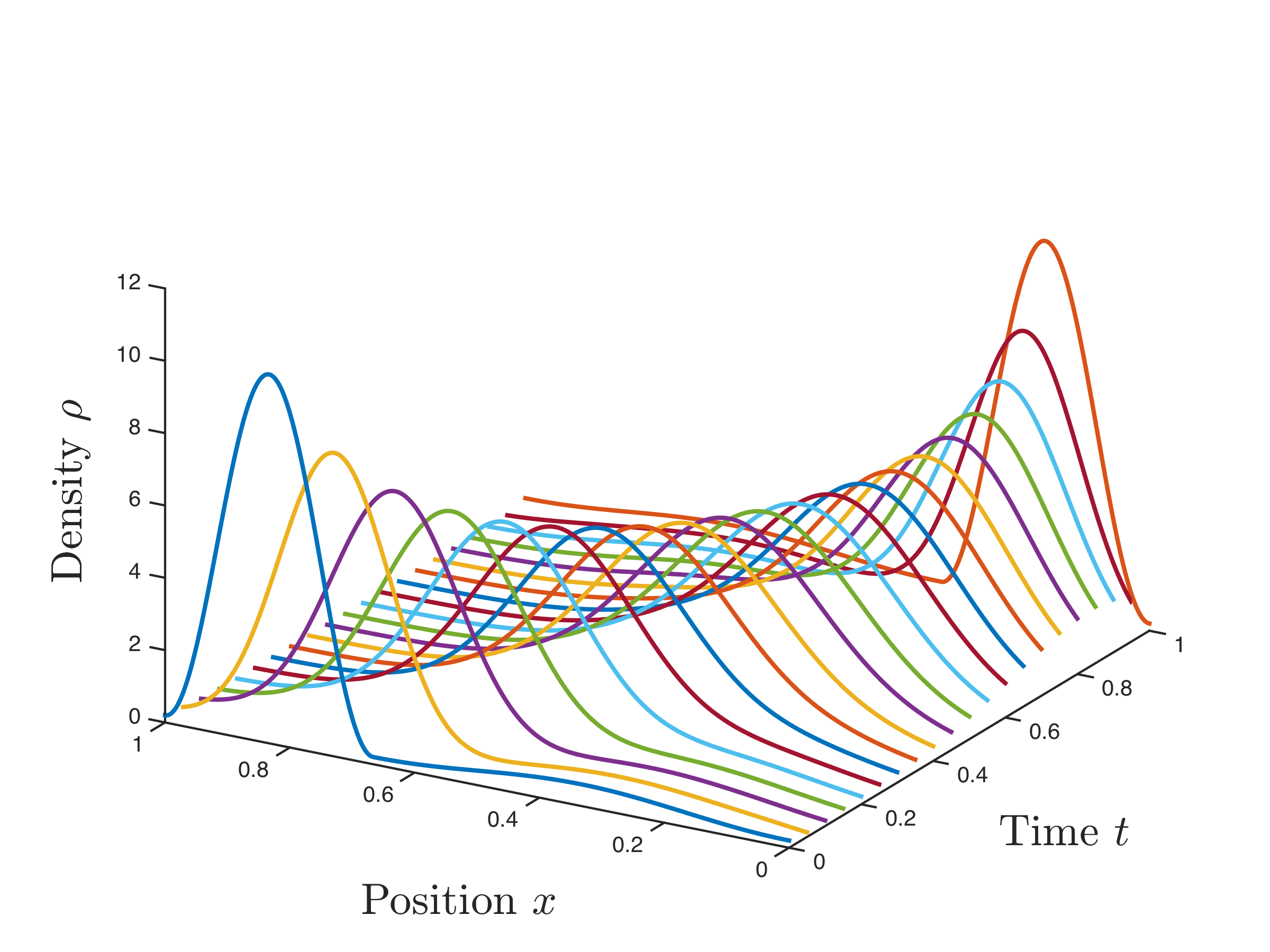}
    \caption{Interpolation based on Schr\"odinger bridge with $\sqrt\epsilon=0.3$}
    \label{fig:sbprior2}
\end{center}\end{figure}
\begin{figure}\begin{center}
    \includegraphics[width=0.50\textwidth]{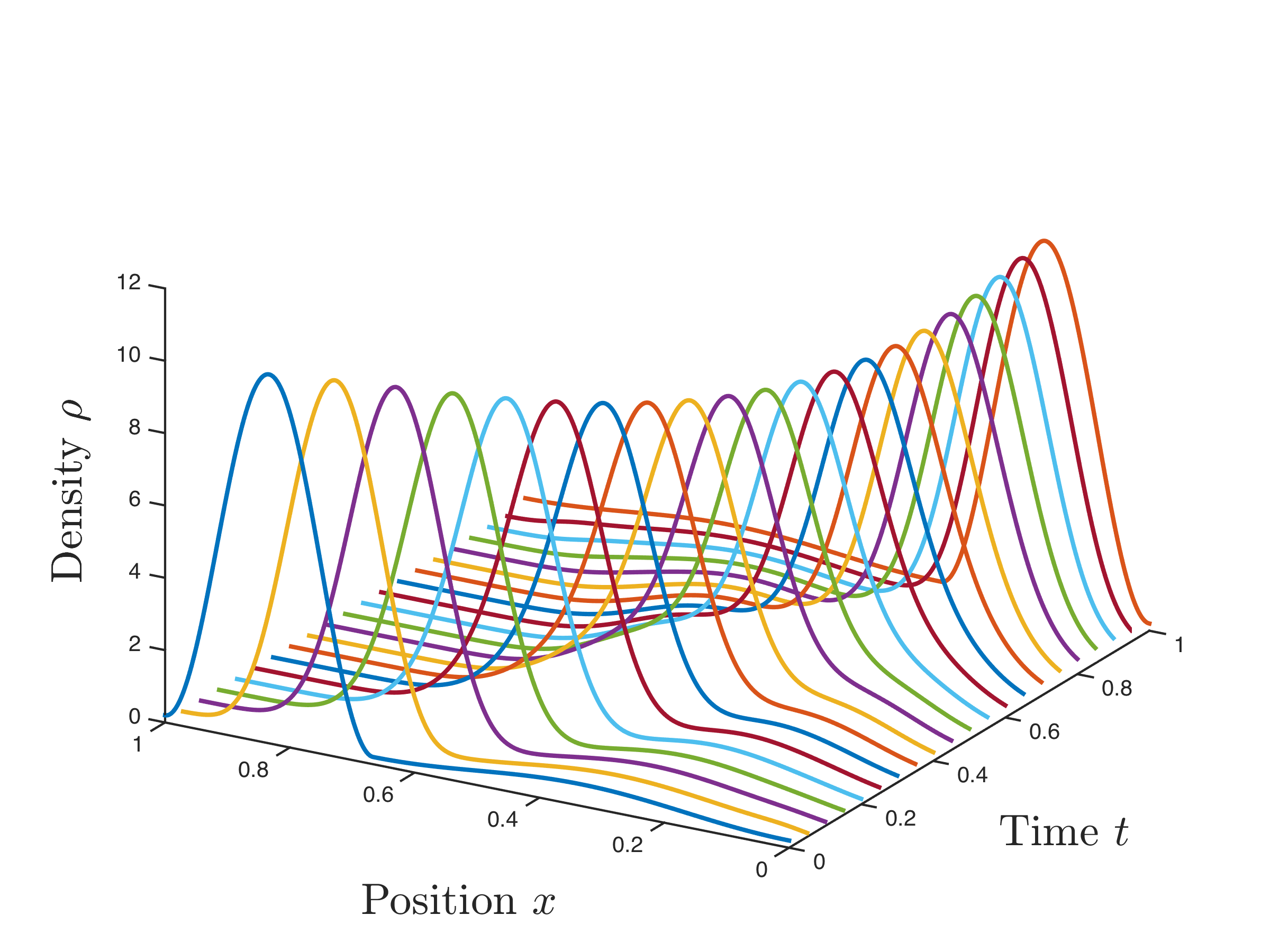}
    \caption{Interpolation based on Schr\"odinger bridge with $\sqrt\epsilon=0.15$}
    \label{fig:sbprior3}
\end{center}\end{figure}
\begin{figure}\begin{center}
    \includegraphics[width=0.50\textwidth]{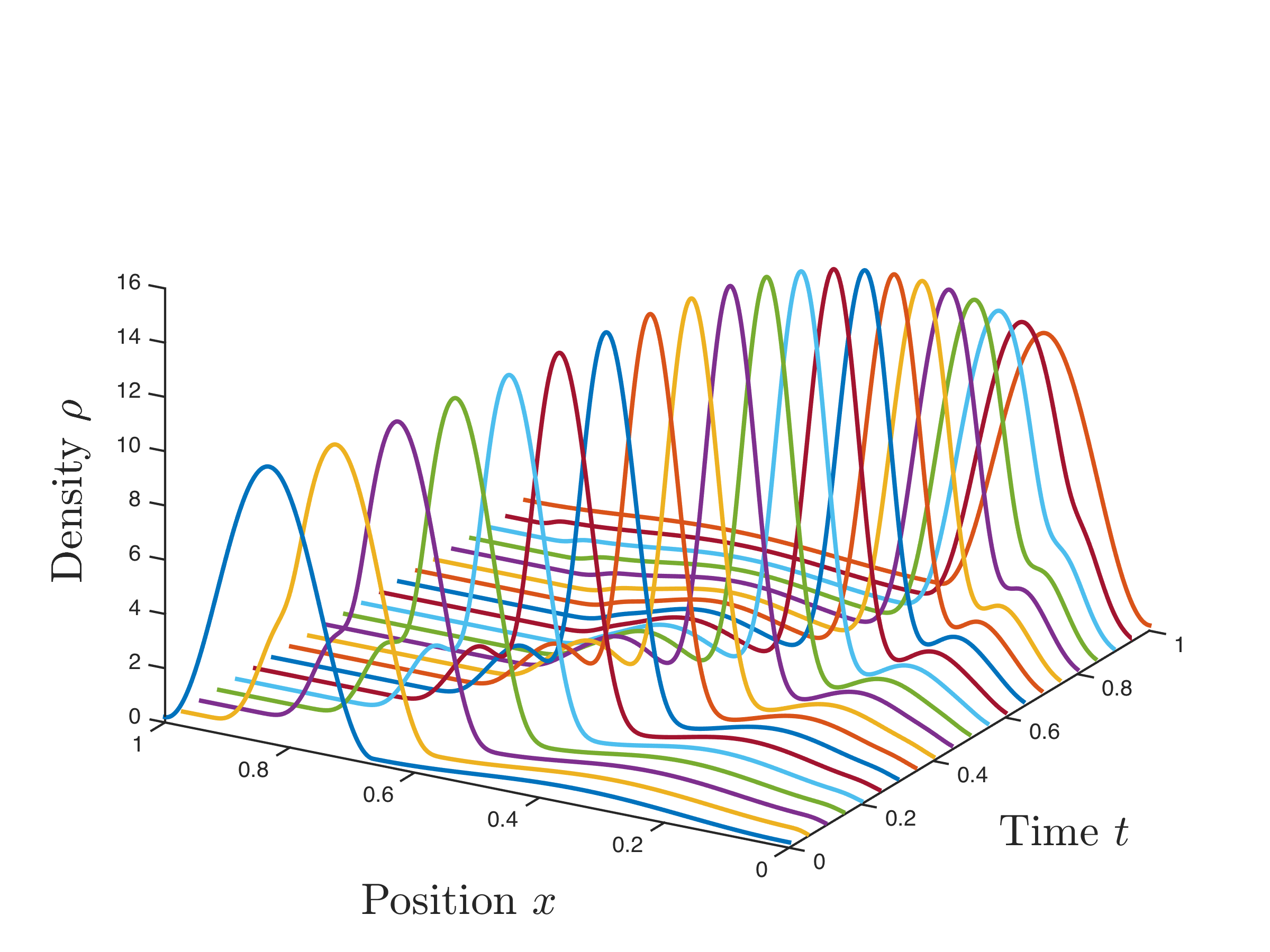}
    \caption{Interpolation based on Schr\"odinger bridge with $\sqrt\epsilon=0.05$}
    \label{fig:sbprior4}
\end{center}\end{figure}
\begin{figure}\begin{center}
    \includegraphics[width=0.50\textwidth]{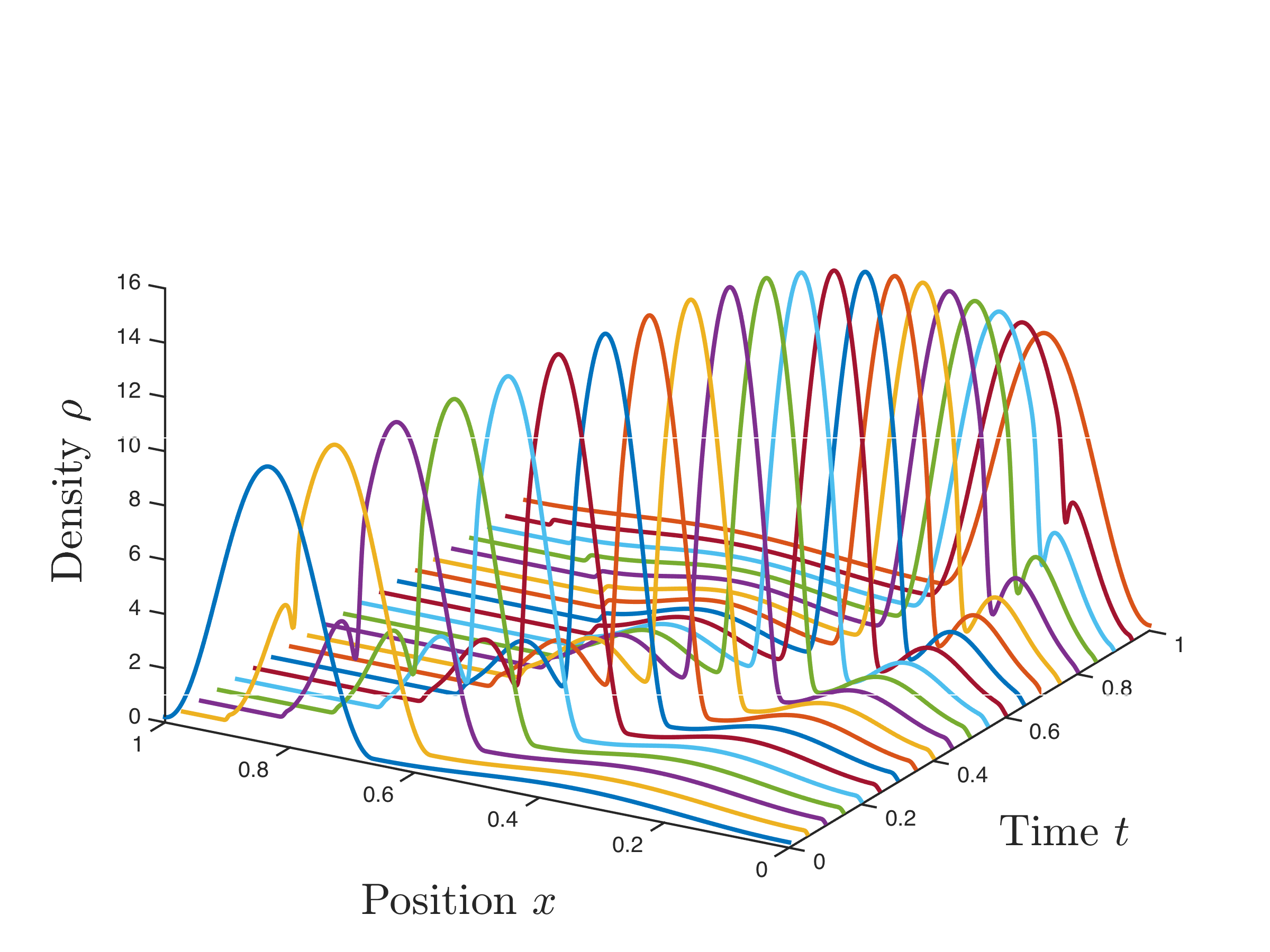}
    \caption{Interpolation based on Schr\"odinger bridge with $\sqrt\epsilon=0.01$}
    \label{fig:sbprior5}
\end{center}\end{figure}

\section{Recap}\label{sec:conclusion}
The problem to steer the random state of a dynamical system between given probability distributions can be equally well be seen as the control problem to simultaneously herd a collection of particles obeying the given dynamics, or as the problem to identify a potential that effects such a transition.
The former is seen to have applications in the control of uncertain systems, system of particles, etc. The latter is seen as a modeling problem and system identification problem, where e.g., the collective response of particles is observed and the prior dynamics need to be adjusted by postulating a suitable potential so as to be consistent with observed marginals.
When the dynamics are trivial and the state matrix is zero while the input matrix is the identity, the problem reduces to the classical OMT problem. Herein we presented a generalization to nontrivial linear dynamics.
A version of both viewpoints where an added stochastic disturbance is present relates to the problem of constructing the so-called Schr\"odinger bridge between two end-point marginals. In fact, Schr\"odinger's bridge problem was conceived as a modeling problem to identify a probability law on path space that is closest to a prior (usually a Wiener measure) and is consistent with the marginals. Its stochastic control reformulation in the 90's has led to a rapidly developing subject. The present work relates OMT as a limit to Schr\"odinger bridges, when the stochastic disturbance goes to zero, and discusses the generalization of both to the setting where the prior linear dynamics are quite general. It opens the way to employ the efficient iterative techniques recently developed for Schr\"odinger bridges to the computationally challenging OMT (with or without prior dynamics). This is the topic of \cite{CheGeoPav15a}.

\section*{Appendix: Proof of Theorem \ref{thm:priorslowingdown}}\label{sec:appendix}

Let $q^\epsilon$ be the Markov kernel of \eqref{eq:priorprocess}, then
    \[
        q^\epsilon(s,x,t,y)=(2\pi\epsilon)^{-n/2}|M(t,s)|^{-1/2}\exp\left(-\frac{1}{2\epsilon}(y-\Phi(t,s)x)'M(t,s)^{-1}(y-\Phi(t,s)x)\right).
    \]
Comparing this and the Brownian kernel $q^{B,\epsilon}$ we obtain
    \[
        q^\epsilon(s,x,t,y)=(t-s)^{n/2}|M(t,s)|^{-1/2}q^{B,\epsilon}(s,M(t,s)^{-1/2}\Phi(t,s)x,t,M(t,s)^{-1/2}y).
    \]
Now define two new marginal distributions $\hat\rho_0$ and $\hat\rho_1$ through the coordinates transformation $C$ in \eqref{eq:coordtrans}, 
    \begin{eqnarray*}
        \hat\rho_0(x)&=&|M_{10}|^{1/2}|\Phi_{10}|^{-1}\rho_0(\Phi_{10}^{-1}M_{10}^{1/2}x)\\
        \hat\rho_1(x)&=&|M_{10}|^{1/2}\rho_1(M_{10}^{1/2}x).
    \end{eqnarray*}
Let $(\hat\varphi_0, \varphi_1)$ be a pair that solves the Schr\"odinger bridge problem with kernel $q^\epsilon$ and marginals $\rho_0, \rho_1$, and define $(\hat\varphi_0^B,\varphi_1^B)$ as
    \begin{subequations}\label{eq:relationschrodinger}
    \begin{eqnarray}
        \hat\varphi_0(x)&=&|\Phi_{10}|\hat\varphi_0^B(M_{10}^{-1/2}\Phi_{10}x)\\
        \varphi_1(x)&=&|M_{10}|^{-1/2}\varphi_1^B(M_{10}^{-1/2}x),
    \end{eqnarray}
    \end{subequations}
then the pair $(\hat\varphi_0^B,\varphi_1^B)$ solves the Schr\"odinger bridge problem with kernel $q^{B,\epsilon}$ and marginals $\hat\rho_0, \hat\rho_1$. To verify this, we need only to show that the joint distribution
    \[
        \cP^{B,\epsilon}_{01}(E)=\int_E q^{B,\epsilon}(0,x,1,y)\hat\varphi_0^B(x)\varphi_1^B(y)dxdy
    \]
matches the marginals $\hat\rho_0, \hat\rho_1$. This follows from
    \begin{eqnarray*}
        \int_{\mR^n} q^{B,\epsilon}(0,x,1,y)\hat\varphi_0^B(x)\varphi_1^B(y)dy
        &=& 
        \int_{\mR^n} q^{B,\epsilon}(0,x,1,M_{10}^{-1/2}y)\hat\varphi_0^B(x)\varphi_1^B(M_{10}^{-1/2}y)d(M_{10}^{-1/2}y)
        \\&=&
        |M_{10}|^{1/2}|\Phi_{10}|^{-1}\int_{\mR^n} q^\epsilon(0,\Phi_{10}^{-1}M_{10}^{1/2}x,1,y)\hat\varphi_0(\Phi_{10}^{-1}M_{10}^{1/2}x)\varphi_1(y)dy
        \\&=&
        |M_{10}|^{1/2}|\Phi_{10}|^{-1} \rho_0(\Phi_{10}^{-1}M_{10}^{1/2}x)=\hat\rho_0(x),
    \end{eqnarray*}
and
    \begin{eqnarray*}
        \int_{\mR^n} q^{B,\epsilon}(0,x,1,y)\hat\varphi_0^B(x)\varphi_1^B(y)dx
        &=& 
        \int_{\mR^n} q^{B,\epsilon}(0,M_{10}^{-1/2}\Phi_{10}x,1,y)\hat\varphi_0^B(M_{10}^{-1/2}\Phi_{10}x)\varphi_1^B(y)d(M_{10}^{-1/2}\Phi_{10}x)
        \\&=&
        |M_{10}|^{1/2}\int_{\mR^n} q^\epsilon(0,x,1,M_{10}^{1/2}y)\hat\varphi_0(x)\varphi_1(M_{10}^{1/2}y)dx
        \\&=&
        |M_{10}|^{1/2} \rho_1(M_{10}^{1/2}y)=\hat\rho_1(y).
    \end{eqnarray*}
Compare $\cP_{01}^{B,\epsilon}$ with $\cP_{01}^{\epsilon}$ it is not difficult to find out that $\cP_{01}^{B,\epsilon}$ is a push-forward of $\cP_{01}^{\epsilon}$, that is,
    \[
        \cP_{01}^{B,\epsilon}=C_\sharp \cP_{01}^{\epsilon}.
    \]
On the other hand, let $\pi^B$ be the solution to classical OMT \eqref{eq:OptTrans} with marginals $\hat\rho_0, \hat\rho_1$, then
    \[
        \pi^B=C_\sharp \pi.
    \]
Now since $\cP_{01}^{B,\epsilon}$ weakly converge to $\pi^B$ from Theorem \ref{thm:slowingdown}, we conclude that $\cP_{01}^{\epsilon}$ weakly converge to $\pi$ as $\epsilon$ goes to 0.

We next show $\cP_{t}^{\epsilon}$ weakly converges to $\mu_t$ as $\epsilon$ goes to 0. The displacement interpolation $\mu$ can be decomposed as
    \begin{eqnarray*}
        \mu(\cdot)=\int_{\mR^n\times\mR^n} \delta_{\gamma^{xy}}(\cdot) ~d\pi(x,y),
    \end{eqnarray*}
where $\gamma^{xy}$ is the minimum energy path \eqref{eq:priorgeodesic} connecting $x, y$, and $\delta_{\gamma^{xy}}$ is the Dirac measure at $\gamma^{xy}$ on the path space. Similarly, the entropic interpolation $\cP^\epsilon$ can be decomposed as
    \begin{eqnarray*}
        \cP^\epsilon(\cdot)=\int_{\mR^n\times\mR^n} \cQ_{xy}^{\epsilon}(\cdot)~d\cP_{01}^{\epsilon}(x,y),
    \end{eqnarray*}
where $\cQ_{xy}^{\epsilon}$ is the pinned bridge \cite{CheGeo14} associated with \eqref{eq:priorprocess} conditioned on $x(0)=x$ and $x(1)=y$. It has the stochastic differential equation representation
    \[
        dx(t)=(A(t)-B(t)B(t)'\Phi(1,t)'M(1,t)^{-1}\Phi(1,t))x(t)dt
        +B(t)B(t)'\Phi(1,t)'M(1,t)^{-1}ydt+\sqrt{\epsilon}B(t)dw(t).
    \]
As $\epsilon$ goes to zero, it converges to
    \[
        dx(t)=(A(t)-B(t)B(t)'\Phi(1,t)'M(1,t)^{-1}\Phi(1,t))x(t)dt
        +B(t)B(t)'\Phi(1,t)'M(1,t)^{-1}ydt,~x(0)=x,
    \]
which is $\gamma^{xy}$. In other word, $\cQ_{xy}^{\epsilon}$ weakly converges to $\delta_{\gamma^{xy}}$. This together with the fact that $\cP_{01}^{\epsilon}$ weakly converges to $\pi$ show that $\cP_{t}^{\epsilon}$ weakly converges to $\mu_t$ as $\epsilon$ goes to 0.

\spacingset{.97}
\bibliographystyle{IEEEtran}
\bibliography{refs}
\end{document}